\theoremstyle{definition}
\newtheorem{definition}{Definition}
\theoremstyle{plain}
\newtheorem{theorem}[definition]{Theorem}
{Lemma}
{Theorem}
\newtheorem{lemma}[definition]{Lemma}
\newtheorem{cor}[definition]{Corollary}
\newtheorem{claim}[definition]{Claim}
{Conjecture}
\theoremstyle{remark}
\newcommand{\pp}{P^3_3}
\newcommand{\ppkl}{P^{k}_\ell}
\newcommand{\pcz}{P^{4}_2}
\newcommand{\ff}{f^4_2}
\newcommand{\fff}{f^3_3}
\newcommand{\F}{\mathcal{F}_2^4}
\newcommand{\G}{\mathcal{F}^3_3}
\newcommand{\hH}{\hat{H}}
\newcommand{\hV}{\hat{V}}
\newcommand{\hE}{\hat{E}}
\newcommand{\sg}{\operatorname{sg}}
\newcommand{\cG}{\G}
\newcommand{\cc}{C}
\begin{document}

\title[Paths in hypergraphs]
{Paths in hypergraphs: a rescaling phenomenon}

\author{Tomasz {\L}uczak}

\address{Adam Mickiewicz University,
Faculty of Mathematics and Computer Science
ul.~Umultowska 87,
61-614 Pozna\'n, Poland}

\email{\tt tomasz@amu.edu.pl}

\author{Joanna Polcyn}

\address{Adam Mickiewicz University,
Faculty of Mathematics and Computer Science
ul.~Umultowska 87,
61-614 Pozna\'n, Poland}

\email{\tt joaska@amu.edu.pl}

\thanks{The first author partially 
supported by NCN grant 2012/06/A/ST1/00261. }

\keywords {Paths, hypergraphs, transition phenomenon}

\subjclass[2010]{Primary: 05D05, secondary:  05C35, 05C38, 05C65. }

\date{March 22, 2017}

\begin{abstract}
Let $P^k_\ell$ denote the loose $k$-path of length $\ell$ and let define  $f^k_\ell(n,m)$ 
as the minimum value of $\Delta(H)$ 
over all $P^k_\ell$-free $k$-graphs $H$ with 
$n$ vertices and $m$ edges. In the paper we 
study the behavior of $f^4_2(n,m)$ and $f^3_3(n,m)$ and characterize the structure of 
extremal hypergraphs.
In particular, it is shown that  
when $m\sim n^2/8$ the value of 
each of these functions drops down from $\Theta(n^2)$ to $\Theta(n)$.  
\end{abstract}

\maketitle

\section{Introduction}
In extremal graph theory we often  study  functions which emerge
when we appropiately scale the extremal parameters of graphs. A typical example is the minimum number of triangles, scaled by 
$\binom n3$,  in graphs on $n$ vertices and  density $p$. 
The celebrated result of Razborov~\cite{Ra} gives a full description of this function as a function of $p$; 
in particular he showed  it is smooth everywhere except at the points $1-1/t$ for integer~$t$
(for a similar result on cliques of larger size see Reiher~\cite{R}).
  Thus, at these points a kind of the continuous phase transition takes place, which 
is related to the structural changes of  the graph on which the minimum is attained. 

It is  not too hard to construct examples which exhibits a much more rapid, discontinuous change of the structure. In this paper however we give examples of two functions 
where not only such a transition is discontinuous but the studied function  rapidly drops to zero and so requires another rescaling. 

In order to state our result we need  a few definitions. 
By a $k$-uniform hypergaph $H=(V,E)$ on $n$ vertices or, briefly, $k$-graph, we mean the family of $k$-element subsets (called edges) of a set of vertices of $H$. 
Let $\ppkl$ denote the loose $k$-uniform path of length $\ell$, i.e. the connected linear $k$-graph with $\ell$ edges and $k\ell-\ell+1$ vertices. 
Our aim is to exhibit a `rescaling phenomenon' for  the maximum degree in 4-graphs which contains no loose paths of length two and 3-graphs without loose paths of length three. In particular we prove the following two results (for more precise statements see Theorems~\ref{th1a}, \ref{th2a} below).

\begin{theorem}\label{th1}
There exists $n_1$ such that for every $P_2^4$-free $4$-graph $H$ with $n\ge n_1$ vertices and $m\ge \binom{\lfloor n/2\rfloor}2+1$ edges   we have $\Delta(H)\ge n^2/32-n$. 

On the other hand, for every $n\ge 4$  
 there exists a $P_2^4$-free 4-graph $H_0$ with $m= \binom{\lfloor n/2\rfloor}2$ edges 
and $\Delta(H)= \lfloor n/2\rfloor-1$. 
 \end{theorem}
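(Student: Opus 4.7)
For the second half of Theorem~\ref{th1} the plan is explicit: fix a near-perfect matching $M=\{p_1,\dots,p_{\lfloor n/2\rfloor}\}$ of pairs on $V=[n]$ (leaving one vertex unused if $n$ is odd) and let $H_0$ consist of all $4$-sets $p_i\cup p_j$ with $i<j$. One checks directly that $|E(H_0)|=\binom{\lfloor n/2\rfloor}{2}$, every matched vertex has degree exactly $\lfloor n/2\rfloor-1$, and two distinct edges meet in $0$ vertices (if their index sets are disjoint) or in exactly $2$ vertices (if they share one index), never in exactly one, so $H_0$ is $P_2^4$-free.

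\textbf{Lower bound.} For the first half I would argue by contradiction: assume $H$ is $P_2^4$-free on $n\ge n_1$ vertices with $\Delta(H)<D:=n^2/32-n$ and derive $m\le\binom{\lfloor n/2\rfloor}{2}$. The crucial local observation is that for every $v$, the link $L_v=\{e\setminus\{v\}:v\in e\in E(H)\}$ is an intersecting $3$-graph on $V\setminus\{v\}$: two disjoint triples of $L_v$ would correspond to two edges of $H$ meeting only in $v$, a $P_2^4$. The Hilton--Milner theorem then gives the dichotomy: either $d(v)=|L_v|\le 3n-8$, or $L_v$ is a star with centre $u(v)$, in which case every edge through $v$ also contains $u(v)$ and we call $\{v,u(v)\}$ a \emph{partner pair}. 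The central structural claim to establish is that the set of partner pairs can be chosen to form a matching $M^\ast$ of size at most $\lfloor n/2\rfloor$. Granted this, an edge $e\in H$ all of whose vertices are anchored contains the partner of each of its vertices, so the partnering map is a fixed-point-free involution on $V(e)$ and $e$ decomposes uniquely into two members of $M^\ast$; these edges are therefore counted by $\binom{|M^\ast|}{2}\le\binom{\lfloor n/2\rfloor}{2}$. The edges containing a non-anchored vertex are bounded by a stability sharpening of Hilton--Milner—an intersecting non-star $L_v$ with $|L_v|=\Omega(n)$ is already close to the extremal Hilton--Milner configuration, hence the four vertices of any edge through $v$ lie in a set of $O(1)$ prescribed vertices—so their number is $o(n^2)$, and the two counts together contradict $m>\binom{\lfloor n/2\rfloor}{2}$.

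\textbf{Main obstacle.} The hardest step is establishing the matching property of the partner pairs. Suppose two pairs $\{v_1,u\}$ and $\{v_2,u\}$ share the vertex $u$; then for any $e_1=\{v_1,u,a,b\}\in H$ and $e_2=\{v_2,u,c,d\}\in H$ the $P_2^4$-free condition forces $\{v_1,a,b\}\cap\{v_2,c,d\}\ne\emptyset$. A case analysis on how this extra intersection is realised—the three cases being $v_2\in e_1$, $v_1\in e_2$, or $\{a,b\}\cap\{c,d\}\ne\emptyset$—either pushes the codegree of the triple $\{u,v_1,v_2\}$ above $D$ (giving $d(u)\ge D$, a contradiction) or confines the link graphs of $\{v_1,u\}$ and $\{v_2,u\}$ to a $2$-vertex cover, which bounds $d(v_1),d(v_2)$ by $O(n)$ and rules out the star-with-large-centre hypothesis. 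An analogous argument eliminates the triple-anchor degeneracy where the centre of the star $L_v$ is replaced by an entire pair $\{u,w\}$ lying in every edge through $v$. The constant $32$ in the definition of $D$ is precisely the slack that allows these exclusions, together with the Hilton--Milner tail of $3n-8$ and the $o(n^2)$ non-anchored contribution, to fit simultaneously under $\binom{\lfloor n/2\rfloor}{2}$.
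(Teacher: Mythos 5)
Your construction is exactly the paper's extremal example (the ``thick clique'' $\tilde K^4_n$), and your verification of it is correct. Your lower bound, however, takes a genuinely different route from the paper --- the paper strips off vertices of degree at most $10$ into a set $R$ (costing at most $10|R|$ edges) and then classifies the \emph{signatures} of edges in the residual minimum-degree-$11$ graph, concluding that every remaining edge is either a union of two ``twins'' (hence lives in a thick clique) or lies in a $2$-star; your plan instead analyses the link $L_v$ of each vertex via Hilton--Milner. Unfortunately the link-based plan has a gap that I do not see how to close as stated.

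The decisive problem is your treatment of vertices whose link is intersecting but not a star. First, the structural claim you invoke is false: the Hilton--Milner extremal family (all triples containing a fixed $x$ and meeting a fixed triple $B$, together with $B$) and the family of all triples meeting a fixed triple in at least two points both have $3n-O(1)$ members and are \emph{not} confined to $O(1)$ vertices, so ``the four vertices of any edge through $v$ lie in a set of $O(1)$ prescribed vertices'' does not follow from near-extremality. Second, and more seriously, even the correct Hilton--Milner bound $d(v)\le 3n-8$ is quantitatively useless here: if $\Theta(n)$ vertices are non-anchored you only get $O(n^2)$ edges through them, with a constant ($3/4$ after dividing by $4$) that swamps $\binom{\lfloor n/2\rfloor}{2}\approx n^2/8$; and even an $o(n^2)$ error term is fatal, because the theorem is a \emph{sharp} threshold at $m=\binom{\lfloor n/2\rfloor}{2}+1$, so a conclusion of the form $m\le\binom{\lfloor n/2\rfloor}{2}+o(n^2)$ yields no contradiction. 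To make the accounting close you must show that each vertex excluded from the pairing costs strictly fewer edges than the $\sim n/4$ it removes from the $\binom{\lfloor n/2\rfloor}{2}$ budget; the paper achieves this with the degree-$10$ stripping, and $3n-8$ cannot play that role. Finally, the matching property of partner pairs --- which you rightly flag as the main obstacle --- is only gestured at: the partner map $v\mapsto u(v)$ is not symmetric, the asserted dichotomy (large codegree of $\{u,v_1,v_2\}$ versus a $2$-vertex cover of the links) is not proved, and the inequality it is meant to produce is delicate, since two pairs $\{v_1,u\},\{v_2,u\}$ sharing $u$ only give $d(u)\ge\max(d(v_1),d(v_2))$ directly, not $d(u)\ge D$. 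These are exactly the points where the paper's signature analysis (Claims 8--11 of Lemma 7) does real work, and they would need full arguments before your outline becomes a proof.
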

      
\begin{theorem}\label{th2}
There exists $n_2$ such that for every $P_3^3$-free $3$-graph $H$ with $n\ge n_2$ vertices and 
$m\ge n^2/8 +1$ edges  we have $\Delta(H)\ge n^2/32-n$. 

On the other hand, for every $n\ge 4$  
 there exists a $P_3^3$-free 3-graph $H_0$ with $m=\lfloor n^2/8\rfloor$ edges 
  and $\Delta(H)\le \lceil n/2\rceil $. 
 \end{theorem}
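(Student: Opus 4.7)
\smallskip
\noindent\textbf{Plan.} The plan is to prove the two halves of Theorem~\ref{th2} separately. For the construction, I would partition $V$ into $A\sqcup B$ with $|A|,|B|$ as balanced as possible, adjusting by one if needed so that the part carrying the matching (say $B$) has even size, and then fix a perfect matching $M$ on~$B$. Setting
\[
E(H_0)=\bigl\{\{a\}\cup p:a\in A,\ p\in M\bigr\},
\]
a parity check case by case in $n\bmod 4$ gives $|E(H_0)|=|A|\cdot|M|=\lfloor n^2/8\rfloor$ and $\Delta(H_0)\le\lceil n/2\rceil$ (attained at the vertices of $B$). Since distinct pairs of $M$ are vertex-disjoint, two edges of $H_0$ can share a single vertex only through their $A$-component, so any three edges forming a loose path must all contain a common $A$-vertex. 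Then the first and third edges intersect, contradicting the definition of a loose $3$-path; hence $H_0$ is $P_3^3$-free.

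\smallskip
For the lower bound I would argue by contradiction: assume $H$ is $P_3^3$-free with $m\ge n^2/8+1$ and $\Delta(H)<D:=n^2/32-n$. Fixing an edge $e^*=\{x_1,x_2,x_3\}\in E(H)$, I split $E(H)\setminus\{e^*\}=E_0\cup E_1\cup E_2$ by the size of intersection with $e^*$. From $|E_1|+2|E_2|+3=\sum_j d(x_j)\le 3D$ I obtain $|E_1|+|E_2|\le 3D-3$, so the hypothesis forces $|E_0|\ge n^2/32+\Omega(n)$. The sole use of $P_3^3$-freeness comes from the structural observation: for each $e'\in E_1$ meeting $e^*$ at $x_i$, writing $p(e'):=e'\setminus\{x_i\}$, every $e''\in E_0$ satisfies $|e''\cap p(e')|\in\{0,2\}$ (otherwise $e^*,e',e''$ is a loose $3$-path). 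Letting $G_i$ denote the graph on $V\setminus e^*$ whose edges are the pairs $p(e')$ for $e'\in E_1$ containing $x_i$, this says that each $G_i$ is \emph{non-crossing} across every $e''\in E_0$: no edge of $G_i$ is split by the cut $(e'',V\setminus(e^*\cup e''))$.

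\smallskip
I would then iterate, applying the same analysis with a suitably chosen $e^{**}\in E_0$ in place of $e^*$. The resulting mutual non-crossing constraints between the link graphs based at $e^*$ and at $e^{**}$ pin down the global structure, and I expect to branch on whether some $G_i$ contains a large connected component (in which case $E_0$ must avoid the component, bounding $|E_0|$ by an edge count over a small vertex set) or all three $G_i$'s are essentially matchings (in which case $E_0$ together with the $E_1$-structure forms a sub-$3$-graph on at most $n-3$ vertices of precisely the form built in the construction half, so $|E_0|\le\lfloor(n-3)^2/8\rfloor$). Either branch, combined with $|E_1|+|E_2|\le 3D-3$, contradicts $m\ge n^2/8+1$. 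The main obstacle is recovering the tight constant $n^2/8$: routine structural arguments give $m\le Cn^2$ for some $C>1/8$, and squeezing $C$ down to $1/8$ requires carefully quantifying how low maximum degree forces the link graphs to be matching-like, and how the three $G_i$'s interact consistently on a common vertex set.
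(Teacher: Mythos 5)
Your construction is correct and is in fact the paper's own extremal example: the $3$-graph you build is exactly a \emph{quasi-bipartite} graph in the sense of Section~3 (take $X\cup Y=B$, with the matching $M$ supplying the pairs $\{x_i,y_i\}$, and $Z=A$), and your verification of the edge count $\lfloor n^2/8\rfloor$, the degree bound $\lceil n/2\rceil$, and $P^3_3$-freeness is sound.

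The lower bound, however, has a genuine gap, located exactly where you flag ``the main obstacle.'' The local setup is fine: $|E_1|+2|E_2|+3=\sum_j d(x_j)\le 3\Delta(H)$ does force $|E_0|\ge n^2/32+3n$ under the contradiction hypothesis, and the observation that $|e''\cap p(e')|\in\{0,2\}$ for $e''\in E_0$ is a correct use of $P^3_3$-freeness. But from there on nothing is actually proved. Your constraints relate $E_0$ only to $E_1$; they say nothing about the internal structure of $E_0$, which carries essentially all of the $n^2/8+1$ edges. The dichotomy you ``expect'' --- either some link graph $G_i$ has a large component, or $E_0$ is quasi-bipartite and hence has at most $\lfloor (n-3)^2/8\rfloor$ edges --- is precisely the hard content of the theorem, not a consequence of anything derived. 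In particular the second branch is unjustified: matching-like links at a single edge $e^*$ do not force $E_0$ to be quasi-bipartite, since a $P^3_3$-free graph can instead be built from large stars (all edges through one vertex), and that is exactly the configuration in which $m$ can exceed $n^2/8$ while the structure is nothing like your construction; ruling out that such stars keep $\Delta(H)$ below $n^2/32-n$ is the whole point.

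The paper obtains this via a global decomposition (Lemma~\ref{podzial}): after removing an exceptional set $R$ meeting only $O(|R|)$ edges, a classification of the possible \emph{signatures} of edges (Claims~\ref{sii1}--\ref{sii5}) shows that every remaining edge either lies in a quasi-bipartite part $H_S$ with $|H_S|\le |S|^2/8$, or belongs to one of a family of vertex-disjoint stars $H_T$ centred in $S$ with leaves in $T$. Claim~\ref{t} then runs an average-degree/component argument on the link graph over $T$ to show that if $m>n^2/8$ then some single star has at least $2m^2/n^2\ge n^2/32$ edges, i.e.\ $\Delta(H)\ge n^2/32$. To complete your route you would need an analogue of that signature classification holding simultaneously for all edges, together with the quasi-bipartite versus disjoint-stars dichotomy and the convexity step that extracts one large star; iterating the one-edge link analysis does not supply these, so as written the first half of the theorem remains unproved.
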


\section{Paths of length two in 4-graphs}

In this section we study the maximum degree of hypergraphs which contains no paths of 
length two. For 2-graphs the problem is trivial since each  graph without paths of length two clearly consists of isolated edges. For 3-graphs the problem is also not very exciting. 
It is easy to see that every component of 3-graph without $P^3_2$ is either a {\em 2-star}, i.e. consists of edges which contain two given vertices, or is a subgraph of
the complete 3-graph on four vertices. Since the latter graph is denser, 3-graph without paths of length two on $n$ vertices contains at most $\lfloor (n+1)/4\rfloor+3\lfloor n/4\rfloor$ edges and 
this maximum number is achieved, for instance,  for the 3-graph which consists of disjoint cliques of size four and, perhaps, one isolated edge (in the case  $n\equiv 3$ (mod 4)). Hence the minimum maximum degree of any  $P^3_2$-free graph is three.

For 4-graphs the problem starts to be interesting. Indeed, let us recall that, at least for large $n$, the maximum number of edges in $P^4_2$-free graph on $n$ vertices is $\binom {n-2}2$ and it is achieved only for 2-stars in which there is a vertex which is contained in every edge of 4-graph; more precisely the following result was proved by Keevash, Mubayi, and Wilson~\cite{KMW}.

\begin{theorem}\label{KMW}
If $h(n)$ denote the maximum number of edges in a $\pcz$-free 4-graph on $n$ vertices, then 
$$h(n)=\begin{cases}
\binom n4 &\quad \textrm{for}\quad n=4,5,6,\\
15 &\quad \textrm{for}\quad n=7,\\
 17 &\quad \textrm{for}\quad n=8,\\
\binom {n-2}2 &\quad \textrm{for}\quad n\ge 9.\\  
\end{cases} $$      
\end{theorem}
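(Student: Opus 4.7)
My plan is to prove the lower and upper bounds separately, the upper bound for $n\ge 9$ being the substantive content.

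\emph{Lower bound.} When $n\in\{4,5,6\}$ the complete $4$-graph $K_n^{(4)}$ trivially contains no $\pcz$ since the latter has $2\cdot 3+1=7$ vertices, giving $h(n)=\binom{n}{4}$. When $n\ge 9$, the \emph{2-star} $S_{a,b}=\{A\in\binom{V}{4}:\{a,b\}\subseteq A\}$ through a fixed pair $\{a,b\}$ is $\pcz$-free (any two of its edges share $\{a,b\}$) and has $\binom{n-2}{2}$ edges. For $n\in\{7,8\}$ the values $15$ and $17$ come from small ad hoc constructions, to be verified by direct enumeration.

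\emph{Upper bound.} The key structural observation is that if $H$ is $\pcz$-free, then for any two edges $e,e'\in H$ we have $|e\cap e'|\in\{0,2,3,4\}$, since sharing exactly one vertex immediately produces the forbidden loose path. I then split into cases. \emph{Case 1: $H$ is intersecting.} Every pair of edges shares at least $2$ vertices, so $H$ is a $2$-intersecting family of $4$-subsets. By Frankl's theorem on $t$-intersecting families, sharpened by Ahlswede--Khachatrian, for $n\ge 9$ the extremal such family has exactly $\binom{n-2}{2}$ members and is (up to isomorphism) the 2-star, closing this case. \emph{Case 2: $H$ has disjoint edges $e_1,e_2$.} Set $R=e_1\cup e_2$. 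Each other edge $f$ satisfies $|f\cap e_i|\in\{0,2,3,4\}$ for $i=1,2$, and since $|f|=4$ the pattern $(|f\cap e_1|,|f\cap e_2|)$ can only be $(2,2)$, $(\ge 2,0)$, $(0,\ge 2)$, or $(0,0)$. Type $(2,2)$ forces $f\subseteq R$ (at most $\binom{8}{4}$ such edges), while every other type yields another disjoint pair and the dichotomy propagates.

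To finish Case 2 I would combine this propagation with an induction on $n$. If some vertex $v$ has $\deg(v)\le n-3$, then by induction $|H|\le\deg(v)+h(n-1)\le\binom{n-2}{2}$, and we are done. Otherwise every vertex has degree at least $n-2$, and I would study the links: for each $v$, the link $L(v)$ is a $3$-graph on $n-1$ vertices which, by $\pcz$-freeness of $H$, must be intersecting (otherwise two edges of $H$ through $v$ share only $v$, giving a $\pcz$). The Erd\H{o}s--Ko--Rado and Hilton--Milner theorems then force most $L(v)$ to be essentially a star in the $3$-graph sense, and the compatibility of these stars across different vertices should force $H$ itself to be a subgraph of a 2-star. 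The principal obstacle is precisely this last step: the edges of types $(\ge 2,0)$ and $(0,\ge 2)$ together contribute $O(n^2)$ candidate $4$-sets, which is comparable to the bound $\binom{n-2}{2}$, so the tightness can only be secured by cashing in the pairwise intersection constraints among these residual edges --- likely by recognising such a residual subfamily as yet another $\pcz$-free family on a substantially reduced ground set, and applying Frankl's theorem a second time. The exceptional small cases $n=7,8$ are best verified separately, either by hand using the intersection property or by a short exhaustive search.
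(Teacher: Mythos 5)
You should first note that the paper does not prove this statement at all: Theorem~\ref{KMW} is quoted verbatim from Keevash, Mubayi and Wilson~\cite{KMW} (``Set systems with no singleton intersection''), so the paper's ``proof'' is a citation, and any self-contained argument has to reproduce a nontrivial published result. Your opening reduction is correct and is exactly the form in which the problem is treated in~\cite{KMW}: since $\pcz$ is two $4$-edges meeting in one vertex, $\pcz$-freeness is precisely the condition that no two edges have a singleton intersection, i.e.\ $|e\cap e'|\in\{0,2,3,4\}$. Your lower-bound constructions are also fine (for $n=7$ take $\binom{[6]}{4}$, for $n=8$ take all $4$-sets meeting a fixed $4$-set in at least $3$ vertices, giving $16+1=17$), and Case~1 (intersecting, hence $2$-intersecting, hence Ahlswede--Khachatrian) is sound as far as the numerical bound goes --- though at $n=9$ the family $\{A:|A\cap[4]|\ge 3\}$ has $4\cdot 5+1=21=\binom{7}{2}$ edges, so your uniqueness claim fails there even within Case~1.

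The genuine gap is Case~2, and you have named it yourself: nothing in the proposal actually bounds $|H|$ when $H$ contains two disjoint edges. This case cannot be waved away, because the \emph{thick clique} (partition $V$ into pairs, take all unions of two pairs) is $\pcz$-free, highly non-intersecting, and has $\binom{\lfloor n/2\rfloor}{2}=\Theta(n^2)$ edges --- only a factor of about $4$ below the target $\binom{n-2}{2}$. So the ``propagation of disjoint pairs'' must be converted into a genuine quantitative loss, and none is exhibited. The two finishing devices you propose also do not close it as stated: (i) the induction ``$|H|\le\deg(v)+h(n-1)$'' needs $h(n-1)\le\binom{n-3}{2}$, which fails at the base ($h(8)=17>\binom{6}{2}=15$, so at $n=9$ you would need a vertex of degree at most $4$, not $n-3=6$); and (ii) in the minimum-degree-$\ge n-2$ branch, Hilton--Milner gives no structural information, since an intersecting non-star $3$-graph on $n-1$ vertices may have up to $\binom{n-2}{2}-\binom{n-5}{2}+1\approx 3n$ edges, well above $n-2$, so the links need not be stars. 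As written, the argument establishes the theorem only for intersecting families; the complementary case, which is where the actual work of~\cite{KMW} lies, remains open in your write-up.
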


      In order to state our result precisely we introduce some notation. 
For $n$ large enough and $m\le \binom {n-2}2$ let function $\ff(n,m)$ be defined as 
\begin{multline*}
\ff(n,m)=\min\{\Delta(H): H=(V,E)\textrm{\ is a $4$-graph such that \ }\\
|V|=n, |E|=m, \textrm{\ and\ }H\not\supset P^4_2\},
\end{multline*}
where here and below $\Delta (H)$ denotes the maximum degree of $H$.
 By $\F(n,m)$ we denote 
the `extremal' family of $P^4_2$-free $4$-graphs on $n$ vertices and $m$ edges such that $\Delta(H)=\ff(n,m)$. By $\tilde K^4_n$ we mean the {\em thick $n$-clique}, i.e. the graph 
on $n$ vertices (almost) partitioned into $\lfloor n/2\rfloor $ `dubletons' such that any pair of dubletons form an edge of $\tilde K^4_n$. 
The main theorem of these section can be stated as follows.

\begin{theorem}\label{th1a}
There exists $\bar n_1$ such that for every $n\ge \bar n_1$ and 
$$\binom{\lfloor n/2\rfloor}2-\frac n5\le m\le \binom{\lfloor n/2\rfloor}2$$ each graph from $\F(n,m)$ is a subgraph of a thick clique. 

Moreover, there exist $\tilde n_1$  such that for every $n\ge \tilde n_1$  and all 
$m\ge \binom{\lfloor n/2\rfloor}2+1$ each graph from $\F(n,m)$ has the maximum degree
at least $n^2/32-n$ and one can delete from it at most $470$ edges to obtain a  union of at most four 2-stars and some number of isolated vertices.
 \end{theorem}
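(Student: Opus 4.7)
The key reformulation is that $H$ is $P_2^4$-free if and only if, for every vertex $u$, the link $L_u = \{e\setminus\{u\}: u\in e\in E(H)\}$ is an \emph{intersecting} $3$-uniform family on $V\setminus\{u\}$: two edges of $H$ sharing exactly one vertex $u$ correspond to two disjoint triples of $L_u$, and conversely. This reduces the global forbidden-path condition to a local intersecting-family condition, for which the Hilton--Milner theorem is available.

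The plan is then as follows. First, apply Hilton--Milner to each $L_u$: whenever $\deg_H(u)$ exceeds the non-trivial intersecting bound $\binom{n-2}{2}-\binom{n-5}{2}+1 = 3n-11$, the link $L_u$ must be a star, so there is a unique \emph{partner} $v=v(u)$ contained in every edge of $H$ through $u$. A short argument then shows that once both $u$ and $v$ exceed this degree threshold the partnership relation is a symmetric involution, pairing the high-degree vertices into disjoint \emph{$2$-star centres} $\{u_i,v_i\}$. Second, I would analyze the interaction of two disjoint $2$-star centres $\{u_1,v_1\}$ and $\{u_2,v_2\}$: writing the edges of the $i$-th star as $\{u_i,v_i\}\cup p$ with $p\in P_i\subseteq\binom{V}{2}$, the $P_2^4$-condition forces any $p_1\in P_1$ and $p_2\in P_2$ to be either equal or vertex-disjoint, a linearity condition that limits the number of mutually compatible \emph{large} $2$-star centres by the absolute constant~$4$. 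A finite case analysis handles the configurations in which two centres share a vertex or a single edge sits in several $2$-stars at once.

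With this structural picture in place, both halves of Theorem~\ref{th1a} follow. For the first half, the assumption $H\in\F(n,m)$ with $m$ close to $\binom{\lfloor n/2\rfloor}{2}$ forces (by comparison with the thick clique) $\Delta(H)\le\lfloor n/2\rfloor-1$, which rules out any $2$-star of super-linear size; the linearity condition then forces the partner pairs into pairwise disjoint dubletons, placing $H$ inside a thick clique, the slack $n/5$ being precisely the margin within which no alternative configuration can beat the thick clique. For the second half, a thick clique holds only $\binom{\lfloor n/2\rfloor}{2}$ edges, so at least one edge of $H$ lies outside any thick-clique pattern; the structural dichotomy then produces a $2$-star of size at least $m/4\ge n^2/32$, yielding $\Delta(H)\ge n^2/32-n$. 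To obtain the residual bound of at most $470$ edges outside the (at most four) large $2$-stars, one finally quantifies how many further edges can be attached to a union of compatible $2$-stars without either creating a $P_2^4$ or boosting the maximum degree; this last bookkeeping, with its explicit absolute constant, is the step I expect to be the main technical obstacle.
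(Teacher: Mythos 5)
Your reduction to intersecting links is correct, and the Hilton--Milner step does yield a clean ``partner'' structure for vertices of degree above $3n-11$, with the involution argument going through. But this is also where the proposal breaks down: Hilton--Milner says nothing about vertices of degree at most $3n-11$, and that is precisely the regime that matters. In the first half of the theorem every $H\in\F(n,m)$ has $\Delta(H)\le\lfloor n/2\rfloor-1<3n-11$, so no vertex has a partner in your sense, and the step ``the linearity condition then forces the partner pairs into disjoint dubletons, placing $H$ inside a thick clique'' has no foundation --- you would need to classify the possible links of size $O(n)$ (stars, triangle-type families, Hilton--Milner families, and many non-maximal configurations) and, more importantly, to control how the links of \emph{different} vertices interact. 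The paper does exactly this interaction analysis through the signatures of edges (Claims~\ref{si1}--\ref{si4}), needing only a minimum-degree-$11$ cleaning step rather than a degree-$3n$ threshold; that is the part your plan is missing. In the second half, vertices of degree at most $3n-11$ could a priori carry $\Theta(n^2)$ edges in total, so you cannot conclude that all but $O(1)$ edges lie in the partnered $2$-stars without a separate argument for the low-degree part.

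The second genuine gap is the claim that compatibility between $2$-stars ``limits the number of mutually compatible large $2$-star centres by the absolute constant $4$.'' This is false as a purely structural statement: a disjoint union of $\Theta(\sqrt n)$ complete $2$-stars, each on $\Theta(\sqrt n)$ petal vertices, is $P^4_2$-free and every centre has degree well above the Hilton--Milner threshold. The bounds of seven and then four stars in the paper are not structural; they use the extremality of $H$ in $\F(n,m)$: one first shows, via the decomposition $V=R\cup S\cup T$ and an average-degree/component count in an auxiliary graph on $T$, that some $2$-star has at least $m/4>n^2/32-n/8$ edges, and then performs edge-switching modifications that would strictly decrease the maximum degree if there were too many stars or too many edges outside them, contradicting minimality. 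Your sketch never invokes this minimality, and the inference ``at least one edge lies outside any thick-clique pattern, hence there is a $2$-star with at least $m/4$ edges'' is a non sequitur --- one stray edge does not produce a giant star. So while the Hilton--Milner reformulation is a genuinely different and promising local tool for the high-degree vertices, the proposal is missing the two load-bearing components of the proof: the structure of the low-degree part and the extremality/switching arguments that yield the constants $4$ and $470$.
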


Clearly, Theorem~\ref{th1} follows from Theorem~\ref{th1a}. Before we  
present its proof  let us mention few of its other consequences. 
It is easy to see that if we want to minimize the maximum degree in union of  $r$ stars for a given $n$, $m$ and $r$, we need to make the $r-1$ largest stars roughly as equal as possible. On the other hand subgraphs of a thick clique can be made almost regular, so for small $m$
the function $\ff(n,m)$ decreases linearly with $m$.
This observations lead directly to the following result.   
 
\begin{cor}\label{cor1a}
For every $x\in [0,1/4)\cup (1/4,1]$ the limit 
$$f(x)=\lim_{n\to\infty} \frac{\ff(n,x \binom{n-2}{2})}{\binom{n-2}2}$$
exists and 
$$f(x)=\begin{cases}0&\textrm{\ for\ }\quad 0\le x< 1/4,\\
(1+2x+\sqrt{12x-3})/24&\textrm{\ for\ }\quad 1/4<x< 1/3,\\
(1+3x+2\sqrt{6x-2})/18&\textrm{\ for\ }\quad 1/3<x< 1/2,\\
(x+\sqrt{2x-1})/2&\textrm{\ for\ }\quad 1/2<x\le 1.\end{cases}
  $$
  
Moreover, for every $m\le \binom{\lfloor n/2\rfloor}2$ we have 
$$\left\lfloor\frac{4m}{n-1}\right\rfloor\le \ff(n,m)\le \left\lceil\frac{4m}{n}\right\rceil\,.\qed$$
\renewcommand{\qed}{}  
\end{cor}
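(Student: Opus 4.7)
The plan is to use Theorem~\ref{th1a} as a black box that, for $n$ large, pins down the structure of extremal $P_2^4$-free hypergraphs, and then to translate the extremal problem into an explicit one-parameter optimisation. I split the range of $x$ at $x=1/4$, where the structural phase transition of Theorem~\ref{th1a} takes place.

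For $x\in[0,1/4)$ and $n$ large enough, $m=x\binom{n-2}{2}<\binom{\lfloor n/2\rfloor}{2}$, so Theorem~\ref{th1a} identifies the extremal hypergraphs with subgraphs of the thick clique $\tilde K^4_n$. I would construct such a subgraph by selecting $m$ of the $\binom{\lfloor n/2\rfloor}{2}$ possible pairs of dubletons as evenly as possible; the maximum degree is at most $\lceil 4m/n\rceil=O(n)$, which after division by $\binom{n-2}{2}$ and letting $n\to\infty$ gives $f(x)=0$. The two displayed bounds for $m\le\binom{\lfloor n/2\rfloor}{2}$ come from the same picture: the upper bound is this balanced construction, and the lower bound follows from the trivial degree-sum inequality combined with the elementary estimate $\lfloor 4m/(n-1)\rfloor\le\lceil 4m/n\rceil$, valid in the stated range because $m\le n(n-2)/8$.

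For $x\in(1/4,1]$ Theorem~\ref{th1a} guarantees that, up to deleting at most $470$ edges, every extremal hypergraph is a disjoint union of $r\le 4$ 2-stars; the $i$-th star carries $d_i$ edges, sits on $2+\ell_i$ vertices with $\binom{\ell_i}{2}\ge d_i$, and has maximum degree $d_i$ at each of its two centre vertices. Setting $\alpha_i:=d_i/\binom{n-2}{2}$ and using $\ell_i\sim\sqrt{2d_i}$, the vertex budget $\sum_i(2+\ell_i)\le n$ rescales, as $n\to\infty$, to $\sum_i\sqrt{\alpha_i}\le 1$ (the $2r$ centre-pair vertices and the $470$ stray edges contribute only $o(1)$ error). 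The problem then becomes: minimise $\max_i\alpha_i$ subject to $\sum_i\alpha_i=x$ and $\sum_i\sqrt{\alpha_i}\le 1$. Strict concavity of $\sqrt{\cdot}$ implies the optimiser has a two-type shape with $r-1$ equal ``large'' values $\beta$ and one smaller residual $x-(r-1)\beta$, satisfying $(r-1)\sqrt{\beta}+\sqrt{x-(r-1)\beta}=1$; solving this as a quadratic in $\sqrt{\beta}$ yields
\[
\beta=\frac{rx+r-2+2\sqrt{(r-1)(rx-1)}}{r^2(r-1)}.
\]
Specialising $r\in\{2,3,4\}$ recovers the three formulas in the statement, with $r=\lceil 1/x\rceil$ in each interval $(1/2,1]$, $(1/3,1/2)$, $(1/4,1/3)$ being forced by the feasibility condition $(r-1)\sqrt{\beta}\le 1$ together with the requirement $rx\ge 1$ (for the residual form to be needed at all).

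The main obstacle I anticipate is justifying the two-type structure. One has to rule out the alternative two-type shape of $1$ large plus $r-1$ equal small values (feasible but strictly worse, as a direct computation at say $r=3,x=2/5$ shows) and to check that $r=\lceil 1/x\rceil$ dominates every other admissible $r\in\{1,\dots,4\}$; both reduce to a one-variable convexity comparison. Once this is settled, the remaining error terms---the $\le 470$ deleted edges, the $2r$ centre-pair vertices in the vertex budget, and the integer rounding forced by $\binom{\ell_i}{2}\ge d_i$---are all $o\bigl(\binom{n-2}{2}\bigr)$ and therefore vanish after rescaling, leaving the clean formula for $f(x)$.
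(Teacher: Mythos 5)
Your proposal is correct and follows essentially the same route as the paper: the paper derives Corollary~\ref{cor1a} directly from Theorem~\ref{th1a} by observing that subgraphs of the thick clique can be made almost regular (giving $f(x)=0$ for $x<1/4$ and the two-sided bound $\lfloor 4m/(n-1)\rfloor\le \ff(n,m)\le\lceil 4m/n\rceil$), while for $x>1/4$ one minimizes the maximum degree of a union of at most four 2-stars by making the $r-1$ largest stars as equal as possible --- exactly the two-type optimisation you set up, whose solution with $r=\lceil 1/x\rceil$ reproduces the three stated formulas. Your write-up is in fact more detailed than the paper's (which leaves the optimisation as an ``easy to see'' remark), and your numerical values check out, including continuity at $x=1/3$ and $x=1/2$.
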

 
 We note also that once the function $\ff(n,m)$ drops from $\Theta(n^2)$ to $\Theta(n)$ it becomes `more stable', i.e. the following result holds. 
 
\begin{cor}\label{cor1b}
For large enough $n$ the following holds. 
\begin{enumerate}
\item[(i)] If $\binom{\lfloor n/2\rfloor}2+4\le m\le \binom {n-2}2$, then 
$\ff(n,m-4)<\ff(n,m)$.  
\item[(ii)] If $\binom{\lfloor n/2\rfloor}2-\frac n{10}\le m\le \binom{\lfloor n/2\rfloor}2$, then $\ff(n,m)=\lfloor n/2\rfloor -1$. \qed
\end{enumerate}
\end{cor}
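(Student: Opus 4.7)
Both parts of Corollary~\ref{cor1b} will be derived directly from the structural content of Theorem~\ref{th1a}.

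For \emph{part (ii)}, set $k=\binom{\lfloor n/2\rfloor}{2}-m$, so $k\le n/10$. Taking any $m$-edge subgraph of the thick clique $\tilde K^4_n$ shows immediately that $\ff(n,m)\le \lfloor n/2\rfloor -1$, since every vertex of $\tilde K^4_n$ has degree exactly $\lfloor n/2\rfloor -1$. For the matching lower bound I would pick any extremal $H\in \F(n,m)$; Theorem~\ref{th1a} then tells me that $H$ is obtained from some thick clique $T$ by deleting exactly $k$ edges. Since $T$ has at least $n-1$ non-isolated vertices, each of degree $\lfloor n/2\rfloor -1$ in $T$, a hypothetical inequality $\Delta(H)\le \lfloor n/2\rfloor -2$ would force every such vertex to lose at least one incident edge, so the total lost degree $\sum_v(\deg_T(v)-\deg_H(v))=4k$ would satisfy $4k\ge n-1$; this is impossible for $k\le n/10$ and $n\ge 2$. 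Hence $\Delta(H)\ge \lfloor n/2\rfloor -1$, and equality with the upper bound yields~(ii).

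For \emph{part (i)} I would fix $m$ in the stated range and take an extremal $H\in \F(n,m)$, so $\Delta(H)=\ff(n,m)\ge n^2/32-n$. Theorem~\ref{th1a} supplies a decomposition $H=H'\cup E_0$ with $|E_0|\le 470$ and $H'$ a union of at most four 2-stars $S_1,\dots,S_r$. The first genuine step is to show that every maximum-degree vertex of $H$ is a center of one of the $S_i$. Indeed, in a 2-star the two centers have degree equal to the number of edges of the star, while any other vertex lies in at most $n-3$ of its edges; consequently a vertex $v$ that is a non-center of \emph{every} $S_i$ satisfies
$$\deg_H(v)\;\le\;4(n-3)+|E_0|\;\le\;4n+470,$$
which for $n$ sufficiently large is strictly smaller than $\Delta(H)\ge n^2/32-n$. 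Thus the set $\mathcal V^*$ of maximum-degree vertices is contained in the (at most eight) centers of $S_1,\dots,S_r$.

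With this in hand, for every star $S_i$ some of whose centers lie in $\mathcal V^*$ I would choose an arbitrary edge $e_i\in S_i$; because every edge of a 2-star contains both of its centers, removing $e_i$ decreases the degree of every center of $S_i$ by one. There are at most four such edges; I would pad the collection to a set $F$ of exactly four edges by adjoining arbitrary further edges of $H$. Then $H\setminus F$ is a $P^4_2$-free 4-graph on $n$ vertices with $m-4$ edges, and every $v\in \mathcal V^*$ has lost at least one incident edge, so
$$\Delta(H\setminus F)\;\le\;\Delta(H)-1\;=\;\ff(n,m)-1,$$
which gives $\ff(n,m-4)<\ff(n,m)$. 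The only genuinely delicate step in this plan is the confinement of $\mathcal V^*$ to star-centers, and it relies precisely on the large gap between the quadratic lower bound $n^2/32-n$ on $\Delta(H)$ coming from Theorem~\ref{th1a} and the merely linear degree a non-center vertex can accumulate from at most four stars and from the $470$ exceptional edges of $E_0$.
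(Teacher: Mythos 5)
Your argument is correct and follows exactly the route the paper intends: the corollary is stated with no proof because it is meant to be read off from the structural characterization in Theorem~\ref{th1a}, and both of your deductions (the degree-deficit count $4k\ge n-1$ forcing $\Delta(H)=\lfloor n/2\rfloor-1$ for part~(ii), and the observation that maximum-degree vertices must be 2-star centers so that deleting one edge per relevant star lowers $\Delta$ for part~(i)) are the natural way to fill in those details. The only point worth spelling out is the one-line check that the thick clique containing $H$ in part~(ii) must use all $\lfloor n/2\rfloor$ dubletons, since a thick clique on fewer pairs has too few edges to contain $m\ge\binom{\lfloor n/2\rfloor}{2}-n/10$ of them.
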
 
 
The main ingredient of our argument is the following decomposition lemma which is 
true for all $\pcz$-free 4-graphs no matter what are their density.
 
	\begin{lemma}\label{podzialp4}
		For any $\pcz$-free 4-graph $H$ there exists a partition of its set of vertices $V=R\cup S\cup T$, such that subhypergraphs of $H$ defined as $H_R=\{h\in H:h\cap R\neq \emptyset\}$, $H_S=H[S]$ and $H_T=H\setminus(H_R\cap H_S)=\{h\in H[V\setminus R]: h\cap T\neq\emptyset\}$ satisfy:
		\begin{enumerate}
			\item[(i)]  $|H_R|\le 10|R|$,
			\item[(ii)] $H_S$ is a subgraph of a thick clique, and so $|H_S|\le {\lfloor |S|/2\rfloor\choose 2} $,
			\item[(iii)]  $H_T$ is a family of disjoint 2-stars such that centers of these stars are in $S$ whereas all other vertices are in $T$. In particular,    $|H_T|\le \binom{|T|}2$.
		\end{enumerate}
	\end{lemma}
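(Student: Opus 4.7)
My plan is a two-stage attack: a greedy construction of $R$ by low-degree peeling, followed by a structural analysis of the resulting dense $P^4_2$-free residual 4-graph.

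I would begin by building $R$ iteratively. Starting from $R_0 = \emptyset$, while there exists $v \notin R_i$ incident to at most ten edges of $H$ disjoint from $R_i$, add $v$ to $R_{i+1}$; let $R$ be the final set. Assigning each edge $h \in H_R$ to the vertex of $R \cap h$ added first, the charge on any vertex of $R$ is at most ten, giving $|H_R| \le 10|R|$ and item~(i). Setting $H' := H[V \setminus R]$, every vertex of $H'$ then has $H'$-degree at least $11$.

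Next I would analyze the structure of $H'$. Because $H'$ is $P^4_2$-free, any two edges through the same vertex $v$ share at least two vertices, so the link $L_v$ of $v$ in $H'$ is an intersecting $3$-graph. Call $v$ a \emph{star vertex} if $L_v$ has a common vertex $u$ (so the pair $\{v, u\}$ lies in every edge of $H'$ through $v$), and a \emph{clique vertex} otherwise. The density bound $|L_v| \ge 11$, together with the $P^4_2$-free property, should force, for every clique vertex $v$, a uniquely determined partner $v^*$ such that every edge of $H'$ through $v$ has the form $\{v, v^*, w, w^*\}$ for another clique-pair $\{w, w^*\}$ --- this is the thick-clique structure. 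For a star vertex the partner $u$ will also be unique, and distinct star pairs should yield vertex-disjoint $2$-stars, since any overlap would create a $P^4_2$.

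To assemble the partition, I take $S$ to be the union of the clique vertices with the centers (partners) of the star vertices, and $T = V \setminus R \setminus S$. Then $H_S = H[S]$ is a subgraph of the thick clique given by the clique-vertex pairing, proving~(ii); and every edge of $H'$ meeting $T$ is a $2$-star edge $\{u, v, x, y\}$ with $\{u, v\} \subseteq S$ and $x, y \in T$, disjoint from all other star edges, proving~(iii). The main obstacle will be the structural dichotomy: promoting a clique vertex from merely having an intersecting link to possessing a single pair partner demands a Hilton--Milner-style analysis of intersecting $3$-graphs, combined with cross-vertex propagation of the $P^4_2$-free constraint, so that the pair partners selected at different clique vertices are globally consistent. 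A secondary delicate issue is the gluing between stars and thick clique: a star pair $\{u,v\} \subseteq S$ must coincide with a pair of the thick-clique pairing, for otherwise a suitable clique edge through $u$ but not $v$ would share exactly one vertex with a star edge, producing a forbidden $P^4_2$.
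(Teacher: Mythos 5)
Your construction of $R$ by low-degree peeling and the charging argument for $|H_R|\le 10|R|$ are exactly what the paper does, and item (i) is fine. The problem is the structural dichotomy on which the rest of your plan rests. You classify a vertex $v$ of $H'$ as a \emph{star vertex} when its (intersecting) link $L_v$ has a common vertex, and as a \emph{clique vertex} otherwise, intending the clique vertices to carry the thick-clique structure. But in the structure you are trying to prove, \emph{every} vertex has a link with a common vertex: a thick-clique vertex $x$ has all its edges through its twin $x'$, a $2$-star centre has all its edges through the co-centre, and a $2$-star leaf has all its edges through both centre vertices. So your ``clique vertex'' class is empty and the dichotomy cannot separate the two kinds of structure. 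Indeed your own description is self-contradictory: if every edge of $H'$ through a clique vertex $v$ contains a fixed partner $v^*$, then $v^*$ is a common vertex of $L_v$ and $v$ is a star vertex by your definition. The criterion that actually works, and that the paper uses, is whether $v$ lies in a \emph{twin} --- a pair $\{x,y\}$ that no edge of $H'$ separates; $S$ is the union of all twins (this collects both the thick-clique vertices and the $2$-star centres), and $T$ is the rest.

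Beyond the misdrawn dichotomy, the combinatorial core of the lemma is missing. The paper proves it by classifying the possible \emph{signatures} of an edge $e$ (the traces $e\cap e'$ of the other edges): no singletons, at least one dubleton, any two dubletons disjoint, no triple meeting a dubleton in exactly one vertex --- whence every signature is either two disjoint dubletons (a thick-clique edge inside $S$) or one dubleton plus two triples through it (a $2$-star edge with centre in $S$ and leaves in $T$). Each of these steps is a genuine case analysis that uses both the minimum-degree-$11$ bound and the extremal bound of Keevash--Mubayi--Wilson (Theorem~\ref{KMW}) to kill small exceptional components; your ``should force'' and ``Hilton--Milner-style analysis'' defer precisely this work. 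Note in particular that Hilton--Milner alone cannot help: an intersecting $3$-uniform link with $11$ edges and no common vertex is perfectly possible in isolation, and ruling it out requires propagating the $P^4_2$-freeness to edges at neighbouring vertices, which is exactly what the signature claims do.
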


\begin{proof}
	Let $H$ be a $\pcz$-free 4-graph with the set of vertices $V$, $|V|=n$, and the set of edges $E$, $|E|=m$. 
	We start with defining the set of exceptional vertices $R\subset V$. We put into $R$ vertices of degree at most ten one by one, until only vertices of degree at least eleven remain. Then, clearly,
	
	\begin{equation}\label{hrp4}
	|H_R|\le 10|R|,
	\end{equation}
	
Let us consider the 4-graph $\hH=H[V\setminus R]=(\hV, \hE)$. 
 For a set $S\subseteq \hV$  by its {\em signature} $\sg(S)$ 
we mean the projection of the edges of $\hH$ into $S$, i.e. 
$$\sg(S)=\{S\cap e: e\in \hE\}\,.$$
Our argument is based on the number of facts  on signatures of $e\in\hE$. 

	\begin{claim}\label{si1}
		The signature of each edge $e$ of $\hH$ contains no singletons and at least one dubleton.
		Moreover, each vertex of $e$ is contained in at least one element of the signature.
	\end{claim}
	
\begin{proof} The first part of the statement follows from the fact that $\hH$ is $\pcz$-free.
Now take $e\in \hE$. Since the degree of each vertex 	$v\in E$ is at least eleven, so it must be contained in at least one set from $\sg(e)$. Finally, if $\sg(e)$ contains no dubletons, 
then each edge $e'$ intersecting $e$ must share with it precisely three elements. But 
then, for  $v'=e'\setminus e$, the vertex  $v'\in \hV$ has degree at most four, 
contradicting our assumption on $\hH$.
\end{proof}

	\begin{claim}\label{si2}
	If the signature of an edge $e$ from $\hH$ contains two dubletons they are disjoint.
		\end{claim}

\begin{proof}
Let $e_1=\{x_1,x_2,x_3,x_4\}\in \hH$ and let $\{x_1,x_2\},\{x_2,x_3\}\in \sg(e_1)$. Then, there exist in $H$ two other edges, $e_2=\{x_1,x_2,y_1,y_2\}$ and $e_3=\{x_2,x_3,y_2,y_3\}$, where $y_1,y_2,y_3\notin e_1$. Set $V_1=e_1\cup e_2\cup e_3$. 
		We  argue that at least one of vertices in the component of $\hH$ 
		containing $V_1$  has degree at most 10
		contradicting the definition of $\hH$.

				Let us first consider the case where $y_1\neq y_3$. 
Since $H$ is $P^4_2$-free, the signature of $V_1$ contains no singletons, but one can easily verify 
that it cannot contain dubletons either. Note also that  
there exists an edge $e'$ not contained in $V_1$ but intersecting it, since otherwise,
 because of the degree restriction,  $V_1$ would contain at least $11\cdot 7/4>19$ edges, 
 contradicting Theorem~\ref{KMW}. 
Furthermore, one can check that to avoid $\pcz$, 
any edge $e'$ not contained in $V_1$ can intersect $V_1$ on one of ten possible triples.
But this means that the vertex $v=e'\setminus V_1$ has degree at most ten, contradicting the choice of $\hH$.
				
		Now let us assume that $y_1=y_3$. Note that to avoid 
$\pcz$ any edge containing $x_4$ not contained in $V_1$  must be of type $\{v,y_i,x_2,x_4\}$.
Since the degree of $x_4$ is at least eleven and it belongs to at most 
$\binom 53=10$ edges contained in $V_1$  such an edge, say,  
$e_4=\{v,y_1,x_2,x_4\}$ exists. But now any edge which intersect set $V_1$   on two vertices 
and does not contain $v$ creates a copy of $\pcz$ and there are only five triples which
added to $v'\notin V_1\cup \{v\}$ create no copy of $\pcz$. Since $V_1\cup \{v\}$ cannot be  
a component of $\hH$ (by the degree restriction such a component would contain more than  $15$ edges contradicting Theorem~\ref{KMW}), the assertion follows.		
\end{proof}

	\begin{claim}\label{si3}
The signature of no edge of $\hH$ contains a triple and a dubleton which intersect on one vertex.  
	\end{claim}	
	
\begin{proof}
If the signature of $e_1=\{x_1,x_2,x_3,x_4\}\in \hH$ contains a triple $\{x_1,x_2,x_3\}$ 
and a dubleton $\{x_3,x_4\}$, then there exist in $\hH$ two edges, $e_2=\{x_1,x_2,x_3,y_1\}$ and $e_3=\{x_3,x_4,y_1,y_2\}$, with $y_1,y_2\notin e_1$. But then the signature of $e_3$ contains two dubletons sharing exactly one vertex contradicting Claim~\ref{si2}.
\end{proof}	
	
	\begin{claim}\label{si4}
		Signature of each edge of $\hH$ consists either of two disjoint dubletons or one dubleton and two triples intersecting on this dubleton. 
	\end{claim}
	
\begin{proof} It is  a straightforward consequence of Claims~\ref{si1}-\ref{si3}.
\end{proof}	

Now we are ready to show Lemma~\ref{podzialp4}. 
We call a pair of vertices $\{x,y\}\subset \hV$ \emph{a twin} if there is no edge $e\in \hE$ such that  $|\{x,y\}\cap e|=1$.
  In other words, each edge of $\hE$ either contains both vertices $x$ and $y$, or none of them. 
	Now let the set $S\subset \hV$ be the union of all twins in $\hH$, $|S|=s$, and $T=\hV\setminus S$. Observe that due to Claim~\ref{si4}, 
	an edge $e\in\hE$ is contained in $S$ (and thus belongs to $H_S$) if and only if 
its signature consists of two disjoint dubletons. Consequently, $H_S$ is a subgraph of a thick clique, and so $|H_S|\le {s/2\choose 2}$.
Moreover, it is easy to see 
that if $e\in \hE$ contains two triples intersecting on a dubleton, then the dubleton must 
be contained in  $S$ while two other vertices of $e$, which can be seperated by some edge, 
lie outside $S$, i.e. they belong to $T$.	
\end{proof}

\begin{proof}[Proof of Theorem~\ref{th1a}]
Let $H\in \F (n,m)$, $m\ge n^2/8-2n/3$, and let a partition $V=R\cup S\cup T$ and subgraphs $H_R$, $H_S$ and $H_T$ of $H$ be defined as in Lemma \ref{podzialp4}. Set $|R|=r$, $|S|=s$ and $|T|=t$. By Lemma~\ref{podzialp4},
\begin{equation}\label{mp4}
|H|=|H_R| + |H_S| + |H_T|\le 10r +{\lfloor s/2\rfloor\choose 2} +\frac{t^2}2 \le 10r+\frac{s^2}8-\frac s4+\frac{t^2}2.
\end{equation}
We start with the following claim.
\begin{claim}\label{tp4}
	If $T\neq \emptyset$ then there exists in $H_T$ a 2-star with at least $2m^2/n^2+m/n$ edges.
\end{claim}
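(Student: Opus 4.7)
The plan is to combine a structural lower bound on the maximum 2-star size in $H_T$ with a density comparison between $H_T$ and the whole of $H$.

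First I would unpack the lemma: write $H_T$ as a vertex-disjoint union of 2-stars $\mathcal{S}_1,\dots,\mathcal{S}_j$, where $\mathcal{S}_i$ has center pair $c_i\subset S$ and $k_i$ leaves in $T$, supporting $d_i\le\binom{k_i}{2}$ edges. Vertex-disjointness on the $T$-side and the fact that every vertex of $T$ has $\hH$-degree at least $11$ (from the construction of $R$) force every vertex of $T$ to be a leaf of some star; hence $\sum_i k_i=t$.

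The structural core is the inequality $d_1:=\max_i d_i\ge 2|H_T|^2/t^2+|H_T|/t$. The bound $d_i\le\binom{k_i}{2}$ is equivalent to $k_i\ge\alpha(d_i):=(1+\sqrt{1+8d_i})/2$. A short computation shows that $d/\alpha(d)=(\sqrt{1+8d}-1)/4$ is increasing in $d$, so if $D=\max_i d_i$ then $d_i/k_i\le d_i/\alpha(d_i)\le D/\alpha(D)$ for every $i$. Multiplying by $k_i$ and summing,
\[
|H_T|=\sum_i d_i\le \frac{D}{\alpha(D)}\sum_i k_i=\frac{tD}{\alpha(D)};
\]
equivalently $\alpha(D)\le tD/|H_T|$. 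Squaring out the radical and simplifying yields $D\ge 2|H_T|^2/t^2+|H_T|/t$.

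Next I would show the density comparison $|H_T|/t\ge m/n$. Writing $|H_T|=m-|H_R|-|H_S|$ and $t=n-r-s$ and using $|H_R|\le 10r$ together with $|H_S|\le\binom{\lfloor s/2\rfloor}{2}\le s^2/8$, cross-multiplication reduces the inequality to
\[
r(m-10n)+s(m-ns/8)\ge 0.
\]
For $m\ge n^2/8-2n/3$ and $n$ large, $m>10n$ so the first summand is non-negative; and $s\le 8m/n$ (so the second summand is non-negative) because configurations with $s>8m/n$ have $|H_R|+|H_S|+|H_T|$ too small to sum to $m$, as a direct count using $|H_T|\le\binom{t}{2}$ shows.

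Combining the two steps gives $d_1\ge 2(m/n)^2+(m/n)=2m^2/n^2+m/n$, which is the claim. The main obstacle is the structural step: the optimization argument that delivers $d_1\ge 2|H_T|^2/t^2+|H_T|/t$, driven by the identity $d/\alpha(d)=(\sqrt{1+8d}-1)/4$ and the resulting inequality $|H_T|\le tD/\alpha(D)$. The density step is more routine, but one still has to verify that configurations with $s>8m/n$ do not arise in the regime $m\ge n^2/8-O(n)$.
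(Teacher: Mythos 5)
Your overall strategy is the same as the paper's: decompose via Lemma~\ref{podzialp4}, show the ``local density'' of $H_T$ on $T$ is at least $m/n$, and then convert an average-degree bound into a bound on the largest component. Your structural step is a correct from-scratch proof of the fact the paper simply quotes (a graph with average degree $d$ has a component with at least $d(d+1)/2$ edges); the monotonicity of $d/\alpha(d)=(\sqrt{1+8d}-1)/4$ does exactly that job, and note that you only need $\sum_i k_i\le t$, so the min-degree discussion of leaves is not even required there.

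There is, however, one genuine gap: your justification of $s\le 8m/n$. You claim that configurations with $s>8m/n$ are ruled out by a direct edge count using $|H_R|\le 10r$, $|H_S|\le\binom{\lfloor s/2\rfloor}{2}$ and $|H_T|\le\binom t2$. This count does not close at the bottom of the relevant range. Indeed, $s>8m/n\ge n-16/3$ forces $r+t\le 5$, and then the best upper bound your count gives is roughly $m\le \binom{\lfloor s/2\rfloor}{2}+60\le n^2/8-n/4+O(1)$, which is perfectly compatible with $m\ge n^2/8-2n/3$ (the hypothesis under which the claim is applied); so no contradiction arises and the step fails as stated. The correct reason $s\le 8m/n$ holds is the minimum-degree condition you already invoked for the other step: if $T\ne\emptyset$, a vertex $v\in T$ has $\deg_{\hH}(v)\ge 11$, all edges through $v$ lie in a single 2-star of $H_T$, and their $T$-projections give $11$ distinct further leaves, so $|T|\ge 12$; hence $s\le r+s\le n-12< n-16/3\le 8m/n$. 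With that substitution your argument is complete and matches the paper's.
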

\begin{proof}
	Let us define a 2-graph $G_T=(T,E_T)$ on the set of vertices $T$ putting $E_T=\{h\cap T: h\in H_T\}$. Note that $\delta(\hH)\ge 11$ and so $|T|\ge 12$.
	 Then the average degree of the graph $G_T$ is bounded below by
	\begin{align*}
	\frac{2|H_T|}{|T|}&=\frac{2(m-|H_R|-|H_S|)}{n-r-s}\ge 2\frac{m-10r-s^2/8}{n-r-s}\\
	&=\frac{2m}{n}+\frac {(2m/n)(r+s)-20r-s^2/4}{n-r-s}\ge \frac{2m}{n},
	\end{align*}
	where the last inequality follows by the facts that $m\ge n^2/8-2n/3$ and $r+s \le  n-|T|\le n-12$. Since any graph with average degree $d$ contains a component of at least $(d+1)d/2$ edges, the assertion follows.
\end{proof}
As an immediate consequence of the above fact we get the following result.
\begin{claim}\label{cl41}
	If $n^2/9\le m\le {\lfloor n/2\rfloor\choose 2}$, then $T=\emptyset$.
\end{claim}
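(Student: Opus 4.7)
The plan is a short proof by contradiction: assuming $T \neq \emptyset$, I would sandwich $\Delta(H)$ between the lower bound given by Claim~\ref{tp4} and an upper bound from the extremality of $H$, and then drive the two bounds into conflict using the hypothesis $m \ge n^2/9$.

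First, if $T \neq \emptyset$, Claim~\ref{tp4} exhibits a 2-star in $H_T$ of size at least $2m^2/n^2 + m/n$. By Lemma~\ref{podzialp4}(iii) its center is a pair of vertices in $S$, both of which are incident to every edge of the star, so
\begin{equation*}
\Delta(H) \;\ge\; \frac{2m^2}{n^2} + \frac{m}{n}.
\end{equation*}
For the matching upper bound I would note that the thick clique $\tilde K^4_n$ is $\pcz$-free and has $\binom{\lfloor n/2\rfloor}{2}\ge m$ edges, so any nearly balanced selection of $m$ of its edges (equivalently, $m$ pairs of the $\lfloor n/2\rfloor$ dubletons), produced by a routine greedy argument, gives a $\pcz$-free $4$-graph on $n$ vertices and $m$ edges in which every dubleton --- and hence every vertex --- appears in at most $\lceil 4m/n\rceil + O(1)$ edges. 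Since $H \in \F(n,m)$ minimizes $\Delta$, this forces $\Delta(H) \le \lceil 4m/n\rceil + O(1)$.

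Combining the two bounds gives $2m^2/n^2 \le 3m/n + O(1)$, i.e.\ $2m^2 \le 3mn + O(n^2)$, which is incompatible with $m \ge n^2/9$ once $n$ is large enough. This contradiction forces $T = \emptyset$. No step looks genuinely delicate: the lower bound is handed to us by Claim~\ref{tp4}, the matching upper bound is an easy construction inside the thick clique, and the final quantitative clash is a single line of arithmetic. If anything requires care, it is only checking that the greedy balancing really yields the claimed $\lceil 4m/n \rceil + O(1)$ degree, which is a standard exercise.
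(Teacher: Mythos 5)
Your proposal is correct and follows essentially the same route as the paper: a lower bound on $\Delta(H)$ of order $2m^2/n^2$ from Claim~\ref{tp4} when $T\neq\emptyset$, against an upper bound on $\Delta(H)$ coming from the extremality of $H$ and a subgraph of the thick clique with $m$ edges. The only difference is that you balance the selection to get $\Delta\le \lceil 4m/n\rceil+O(1)$, whereas the paper simply uses that \emph{any} subgraph of the thick clique has maximum degree at most $n/2$, which already beats $2m^2/n^2\ge 2n^2/81$ for large $n$ — so your greedy-balancing step, while fine, is not needed.
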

\begin{proof}
	Note that a thick clique  on $n$ vertices has ${\lfloor n/2\rfloor\choose 2}$ edges and the maximum degree $\Delta \le n/2$. As a consequence, if for $n^2/9\le m\le {\lfloor n/2\rfloor\choose 2}$, $H\in \F(n,m)$, then $\Delta(H)\le  n/2<0.02n^2<2m^2/n^2$. Hence, by Claim \ref{tp4}, $T=\emptyset$.
\end{proof}

\begin{claim}\label{cl42}
	If $H_T=\emptyset$ then $m\le {\lfloor n/2\rfloor\choose 2}$. Furthermore, if 
in addition $H_R\neq \emptyset $, then  $m \le {\lfloor n/2\rfloor\choose 2}-n/5$.
\end{claim}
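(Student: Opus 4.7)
The first step I would take is to reduce to the case $V = R \cup S$. Since $H_T = \emptyset$, every edge of $\hH$ lies inside $S$, so any $v \in T \subset \hV$ would have degree $0$ in $\hH$, contradicting the fact that every vertex of $\hV$ has degree at least $11$ in $\hH$. Hence $T = \emptyset$ (the degenerate case $R = V$ is handled by $m \le 10 n$, much smaller than either required bound for large $n$), so $s = n - r$ and $m = |H_R| + |H_S| \le 10r + \binom{\lfloor(n-r)/2\rfloor}{2}$.

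For the first assertion, the case $H_R = \emptyset$ is immediate from $|H_S| \le \binom{\lfloor n/2 \rfloor}{2}$; the case $H_R \ne \emptyset$ will follow from the second (stronger) assertion. When $r \ge 2$, a short computation using $\binom{a}{2}-\binom{b}{2} = (a-b)(a+b-1)/2$ with $a = \lfloor n/2 \rfloor$, $b = \lfloor (n-r)/2 \rfloor$ shows that $\binom{\lfloor n/2\rfloor}{2} - \binom{\lfloor(n-r)/2\rfloor}{2}$ already swallows $10r + n/5$ for $n$ large; for $r$ of order $n$ the bound $|H_S| \le \binom{\lfloor s/2\rfloor}{2}$ is itself much smaller than $\binom{\lfloor n/2\rfloor}{2} - n/5$; and the case $r = 1$ with $n$ even is handled similarly. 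The critical case is therefore $r = 1$ with $n$ odd, where $\lfloor (n-1)/2 \rfloor = \lfloor n/2 \rfloor$ and the arithmetic bound is tight.

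To handle this case I would use $P^4_2$-freeness of $H$. The unique vertex $v \in R$ lies in an edge $e = \{v, y, z, w\}$ with $y, z, w \in S$, and for each $e' \in H_S$ one must have $|e \cap e'| \ne 1$, i.e., $|\{y,z,w\} \cap e'| \in \{0, 2, 3\}$. Since $e'$ is a union of two dubletons of $S$, writing $D_y, D_z, D_w$ for the dubletons through $y, z, w$, there are two sub-cases. If $D_y, D_z, D_w$ are three distinct dubletons, the only edges of $H_S$ meeting $\{y,z,w\}$ are $D_y \cup D_z$, $D_y \cup D_w$, $D_z \cup D_w$, costing $3 \lfloor s/2 \rfloor - 9$ edges compared with the full thick clique. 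If, say, $\{y,z\}$ is itself a dubleton equal to $D_y$ while $w$ lies in a different dubleton $D_w$, then every $e' \in H_S$ through $w$ must also contain $\{y,z\}$, forcing $e' = D_y \cup D_w$ and costing $\lfloor s/2 \rfloor - 2$ edges.

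In either sub-case $|H_S| \le \binom{\lfloor s/2 \rfloor}{2} - \Omega(n)$, and combined with $|H_R| \le 10$ this yields $m \le \binom{\lfloor n/2 \rfloor}{2} - n/5$ for $n$ large, proving both assertions. The main obstacle I expect is the careful enumeration of thick-clique edges ruled out in the first sub-case, where one must count the edges meeting $\{D_y, D_z, D_w\}$ without double-counting the three ``triangular'' edges kept among them.
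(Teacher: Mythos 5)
Your proposal is correct and follows essentially the same route as the paper's: reduce to $T=\emptyset$ via the minimum-degree condition on $\hH$, dispose of $r\ge 2$ (and of $r=1$ with $n$ even) by arithmetic, and in the critical case $r=1$, $n$ odd use $\pcz$-freeness to show that the edge through the exceptional vertex excludes all but $O(1)$ of the $\Omega(n)$ thick-clique edges meeting it. The paper phrases this last step as a degree bound ($\deg_{H_S}(w)\le 2$ for a vertex $w$ of a separated twin) rather than your direct count of forbidden edges, but the underlying observation is identical.
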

\begin{proof}
Let $H\in \F(n,m)$ be such that $H_T=\emptyset$.
Then the vertex set of $H$ can be partitioned into sets $S$ and $R$, where $|S|=s$, $s$ is even,  and $|R|=r=n-s$. 
The number of edges $m$ in such graph is bounded from above by $s^2/8+10r$.
It is easy to see that if $r\ge 2$ and $n$ is large enough this number is smaller than 
${\lfloor n/2\rfloor\choose 2} -{n}/5$. Let us consider now the case when $R$ consists of 
just one vertex $v$; note that in this case $n$ is odd. Suppose that $v$ belongs to 
  an edge $e$. Then $e$ must 
separate one twin $\{w,w'\}$ in $S$. But then each edge $e'$ of $H_S$ which contains the twin 
$\{w,w'\}$ must intersect $e$ on at least one more vertex and consequently $w$, as well as $w'$,
can be contained in at most two edges of $H_S$. Since $\deg(v)\le 10$ so, very crudely, $\deg(w), \deg (w')\le 12$. But then  
\begin{align*}
m&\le \binom {(n-3)/2}2+10+26 =\binom{(n-1)/2} 2 -\frac{n-3}2+36\\
&\le \binom{(n-1)/2} 2-\frac {n}5\,.\qed
\end{align*}
\renewcommand{\qed}{}
\end{proof}
  
Note that Claim~\ref{cl42} immediately implies the first part of Theorem~\ref{th1a}. To consider the second part let us assume that $H\in \F(n,m)$,  where $m>{\lfloor n/2\rfloor\choose 2}$.
Then, by Claim~\ref{cl42},   $T\neq \emptyset$. Consequently, by Claim \ref{tp4}, there exists in $H_T$ a 2-star with at least 
\begin{equation}\label{bigstarp4}
\frac{2m^2}{n^2}+\frac mn> \frac{2m}{n}\cdot\frac{n-4}{8}+\frac mn =\frac m4> \frac{n^2}{32}-\frac n8
\end{equation}
edges implying that $\Delta (H) > n^2/32 -n$ and $H_T$ contains at most three largest 2-stars.

\begin{claim}\label{6starsp4}
	$H_T$ consists of at most seven disjoint 2-stars.
\end{claim}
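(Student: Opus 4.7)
My plan is to argue by contradiction, assuming $H_T$ contains $k \ge 8$ distinct 2-stars, with centers $\{a_i,b_i\}\subset S$ and leaf-pair sets $L_i\subset\binom{T}{2}$ for $i=1,\ldots,k$. The crucial structural step is to prove that the leaf vertex sets $V(L_1),\ldots,V(L_k)\subset T$ are pairwise disjoint. Indeed, if $\{x,y\}\in L_i$ and $\{x',y'\}\in L_j$ with $i\ne j$ shared at least one vertex of $T$, then the edges $\{a_i,b_i,x,y\}$ and $\{a_j,b_j,x',y'\}$ (which meet only in $\{x,y\}\cap\{x',y'\}$, since the two twin centers are disjoint subsets of $S$) would either meet in a single vertex and form a $P_2^4$, or, in the case $\{x,y\}=\{x',y'\}$, would place the two disjoint dubletons $\{a_i,b_i\}$ and $\{x,y\}$ in the signature of $\{a_i,b_i,x,y\}$, forcing that edge into $H_S$ by Claim~\ref{si4} despite $x,y\in T$.

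Writing $v_i=|V(L_i)|$ and $e_i=|L_i|$, disjointness of leaves gives $\sum_{i=1}^{k}v_i\le |T|\le n$, while $e_i\le\binom{v_i}{2}$ yields $v_i\ge\sqrt{2e_i}$. By Claim~\ref{tp4}, the largest star satisfies $e_1\ge 2m^2/n^2+m/n\ge n^2/32$, so $v_1\ge n/4$. The heart of the argument is an iteration of Claim~\ref{tp4}: because the peeled stars sit on disjoint leaf sets, the average-degree computation in the proof of Claim~\ref{tp4} applies verbatim to the residual 2-graph $G_T^{(j)}$ on $T\setminus\bigcup_{i<j}V(L_i)$, with $|H_T^{(j)}|=|H_T|-\sum_{i<j}e_i$ and $t_j=|T|-\sum_{i<j}v_i$ in place of $|H_T|$ and $|T|$. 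One verifies that $2|H_T^{(j)}|/t_j\ge 2m/n$ is preserved for every $j\le 7$, producing a new star of leaf count $v_j\ge n/4$ at each step. Summing eight such contributions yields $\sum_{i=1}^{8}v_i\ge 2n$, which contradicts $\sum v_i\le n$, so $k\le 7$.

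The main obstacle is making the iteration quantitative. One has to check that, after peeling off stars $1,\ldots,j-1$, the identity
\[
\frac{2|H_T^{(j)}|}{t_j}-\frac{2m}{n}=\frac{2}{n\,t_j}\Bigl((m/n)(r+s+\textstyle\sum_{i<j}v_i)-10r-s^2/4-\sum_{i<j}e_i\Bigr)
\]
stays non-negative. This reduces to verifying that every removed star $i$ has edges-per-leaf ratio $e_i/v_i$ not exceeding the global density $m/n$ by more than the current slack, which follows from the uniform bounds $e_i\ge 11 v_i/2$ (coming from $\delta(\hH)\ge 11$) and $e_i\le\binom{v_i}{2}$ together with $m\ge n^2/8+1$. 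Once this is done, the constant $7$ appears simply because the vertex budget $|T|\le n$ can absorb no more than seven slabs of size $\ge n/4$ before collapsing.
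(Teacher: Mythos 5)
Your approach is genuinely different from the paper's, and it has a fatal gap: Claim~\ref{6starsp4} is not a purely structural statement about $\pcz$-free $4$-graphs --- without extremality it is simply false. Consider the disjoint union of one complete $2$-star on $\lfloor n/2\rfloor$ leaves and eight complete $2$-stars on twelve leaves each: this graph is $\pcz$-free, has $\binom{\lfloor n/2\rfloor}{2}+528$ edges, every non-center vertex has degree at least eleven (so $R=\emptyset$ and all nine stars land in $H_T$), yet $H_T$ consists of nine disjoint $2$-stars. It escapes the claim only because its maximum degree, about $n^2/8$, is far from minimal, i.e.\ it does not belong to $\F(n,m)$. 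Any correct proof must therefore invoke the minimality of $\Delta(H)$, which your argument never does. The paper's proof is an exchange argument: ordering the stars by size, the largest has more than $m/4$ edges by (\ref{bigstarp4}), so $S_7$ and $S_8$ together have fewer than $\tfrac 27\bigl(m-\tfrac m4\bigr)=\tfrac{3}{14}m<\Delta(H)-3$ edges; merging them into a single $2$-star and shifting one edge from each of $S_1,S_2,S_3$ onto it yields a $\pcz$-free graph with the same $n$ and $m$ but strictly smaller maximum degree, contradicting $H\in\F(n,m)$.

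The concrete step that fails in your write-up is the iteration of Claim~\ref{tp4}. Preserving $2|H_T^{(j)}|/t_j\ge 2m/n$ after peeling star $i$ requires its edges-per-leaf ratio $e_i/v_i$ to be at most $m/n$ plus the accumulated slack, but the bounds you invoke, $11v_i/2\le e_i\le\binom{v_i}{2}$, only give $e_i/v_i\le (v_i-1)/2$, which can be as large as roughly $n/4$ --- about twice $m/n\approx n/8$. In the example above the first star already carries essentially all of $m$ on $n/2$ leaves, the residual average degree collapses after one peeling step, and no second star with $n/4$ leaves is produced; the many remaining stars each occupy only fourteen vertices, so the vertex-budget count $\sum v_i\le n$ never bites. (Your preliminary argument that the leaf sets are pairwise disjoint is also unnecessary, since Lemma~\ref{podzialp4}(iii) already asserts that $H_T$ is a family of disjoint $2$-stars.)
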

\begin{proof}
	Assume for a contradiction, that $H_T$ consists of at least eight disjoint 2-stars. Denote them by $S_i$, $i\ge 1$, where $\deg_H(v_i)\ge \deg(v_j)$, for $i<j$,  and $\{v_i,v'_i\}$ stands for a center of a 2-star $S_i$. Note that by (\ref{bigstarp4}),
	$$
	\deg_H(v_7)+\deg_H(v_8)< \frac 27\cdot \left(m-\frac m4\right)=\frac{3}{14}m < \Delta(H)-3.
	$$
	But then we can modify $H$ by switching edges of $S_8$ so they form one 2-star with $S_7$, removing one edge from each of the 2-stars, $S_1$, $S_2$, $S_3$ and add three edges to $S_7$ (note that since $\delta(\hH)\ge 11$, both $S_7$ and $S_8$ has at least twelve vertices each). Clearly, the modified graph $H'$ is $\pcz$-free, has $n$ vertices, $m$ edges but the maximum degree of $H'$ is smaller than the maximum degree of $H$, contradicting the fact, that $H\in \F(n,m)$.
\end{proof}

\begin{claim}
	One can delete from $H$ at most $470$ edges and get a union of at most 7 disjoint stars and some isolated vertices.
\end{claim}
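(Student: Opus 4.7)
The plan is to argue by contradiction using a switching argument modelled on the one in Claim~\ref{6starsp4}. By Claim~\ref{6starsp4}, $H_T$ is already a union of at most seven disjoint 2-stars $S_1,\dots,S_7$; thus removing every edge outside $H_T$---that is, every edge of $H_R\cup H_S$---leaves precisely a union of at most seven 2-stars together with some isolated vertices. Hence it is enough to show $k:=|H_R|+|H_S|\le 470$.

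Suppose for contradiction that $k>470$. Let $d_1\ge\cdots\ge d_7$ denote the sizes of the 2-stars of $H_T$, write $d=d_1=\Delta(H)$, and let $j_0$ be the number of stars of degree exactly $d$. I would construct a $\pcz$-free 4-graph $H'$ on the same $n$ vertices with the same $m$ edges but with $\Delta(H')\le d-1$, contradicting $H\in\F(n,m)$. The construction is the natural one: delete every edge of $H_R\cup H_S$, remove one edge from each top star, and distribute $k+j_0$ new edges of the form $\{v_i,v'_i,a,b\}$ among the non-top stars so that no final star degree exceeds $d-1$. As in Claim~\ref{6starsp4}, the degree bookkeeping is comfortable: the non-top stars' capacity
\[
\sum_{i:d_i<d}(d-1-d_i)=7d-7+j_0-(m-k)
\]
exceeds $k+j_0$ because $d\ge m/4>(m+7)/7$ by~\eqref{bigstarp4}.

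The main obstacle is the realizability of these additions. For $\pcz$-freeness, the leaf pair $\{a,b\}$ of every new edge extending $S_i$ must lie in $V(S_i)\cup F$, where $F$ denotes the set of \emph{free} vertices---those in no existing 2-star; moreover, the subsets of $F$ assigned as new leaves to different stars must be pairwise disjoint so that the seven stars remain vertex-disjoint. After deleting $H_R\cup H_S$, both $R$ and every non-center vertex of $S$ lie in $F$, so $|F|\ge |R|+|S|-14$ (with a minor adjustment if fewer than seven 2-stars occur). Combining $|H_R|\le 10|R|$ with $|H_S|\le\binom{\lfloor|S|/2\rfloor}{2}$, the hypothesis $k>470$ forces $|F|\ge 32$ in every admissible split of $k$ between $H_R$ and $H_S$, and then placing all of $F$ inside a single non-top star (say the smallest, where the degree slack $d-1-d_i$ is largest) provides $\binom{|F|}{2}\ge 496$ usable new pairs---more than enough to absorb the $k+j_0$ additions. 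The numerical constant $470$ emerges from this worst-case count; more balanced splits of $k$ yield a looser bound.

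Once $H'$ has been constructed, the contradiction is immediate: $H'$ is $\pcz$-free, has the correct vertex and edge counts, and satisfies $\Delta(H')\le d-1<\Delta(H)$, contradicting $H\in\F(n,m)$. Thus $k\le 470$, so deleting the edges of $H_R\cup H_S$ turns $H$ into a union of at most seven disjoint 2-stars together with some isolated vertices.
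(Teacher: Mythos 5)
Your overall strategy -- delete $H_R\cup H_S$, relocate those $k$ edges so that the maximum degree drops, and contradict $H\in\F(n,m)$ -- is the same switching idea the paper uses, but the specific relocation scheme you propose has a genuine gap in the regime of large $k$. The contradiction hypothesis only gives $k>470$; it does \emph{not} give $k\le 493$, and in fact $k=|H_R|+|H_S|$ can a priori be as large as $\Theta(n^2)$, since $|H_S|\le \binom{\lfloor s/2\rfloor}{2}$ with $s$ possibly of order $n$. Your final step, ``$\binom{|F|}{2}\ge 496$ usable new pairs -- more than enough to absorb the $k+j_0$ additions,'' silently assumes $k+j_0\le 496$ and therefore does not close the argument for larger $k$. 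What is actually needed is an inequality of the form $\binom{r+s-14}{2}\ge 10r+s^2/8+3$ (valid once $r+s\ge 46$, which $k>470$ does force), and this has to be stated and checked rather than replaced by the single number $496$. A second, related problem: if you place \emph{all} new edges on one non-top star, the binding constraint is that star's individual slack $d-1-d_7\ge k+j_0$, not the aggregate slack $\sum_{i:d_i<d}(d-1-d_i)$ that you computed. Verifying the individual slack requires $k\le (3/20)m-O(1)$, which in turn needs the bound $s\le 3n/4$ (coming from $t\ge n/4$, since the largest star alone needs about $n/4$ leaves); none of this appears in your write-up, and the margin is thin. Finally, your scheme presupposes that a non-top star exists: it is consistent with everything proved so far that $H_T$ consists of at most three stars all of maximal size (indeed $qd=m-k$ with $d>m/4$ forces $q\le 3$ in that case), and then there is no receptacle for the relocated edges at all.

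The paper sidesteps all three issues by relocating the edges into a \emph{new} 2-star built on the $r+s-14$ freed vertices disjoint from the existing centers (splitting it into three 2-stars when a single one would itself violate the degree bound): a fresh star has no pre-existing degree, its admissible leaf pairs are exactly $\binom{r+s-16}{2}$, and it exists regardless of how many stars $H_T$ has. This yields $r+s\le 47$ directly, and then $k\le 10r+s^2/8-s/4\le 470$ follows from the degree bounds in Lemma~\ref{podzialp4}. Your approach can be repaired along the same lines (either by adding the missing estimates or by falling back on a fresh star), but as written the key quantitative step fails for large $k$.
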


\begin{proof}
	First we observe that  $r+s <48$. Indeed, if this is not the case we can modify $H$ by removing $\bar{m}=|H_R|+|H_S|\le 10r+s^2/8-s/4$ edges of $H_R\cup H_S$, delete one edge from each of the three largest 2-stars of $H_T$ and on the remaining $r+s-14$ vertices disjoint from the centers of 2-stars  build a new 2-star (or three 2-stars if $r+s>m/4$) with $\bar{m}+3$ edges. The $\pcz$-free graph obtained in this way would have the same number of edges but  the maximum degree smaller than $H$, contradicting the fact that $H\in \F(n,m)$. 
	
Since $r+s \le 47$ we have  $|H_R\cup H_S|\le 10r+s^2/8-s/4\le 470$ and therefore, by Claim \ref{6starsp4}, one can delete from a graph $H$ at most $470$ edges of $H_S\cup H_R$ to get a graph which is an union of at most seven disjoint 2-stars and some isolated vertices. 
\end{proof}

To complete the proof of Theorem \ref{th1a} we need to reduce the number of 2-stars from seven to four. Let $S_1$ be a 2-star with the largest number of edges in $H_T$. By (\ref{bigstarp4}), it has $t_1>n^2/32-n/8$ edges and therefore $n_1\ge n/4+2$ vertices. Note that there are no place on four such stars in the graph of $n$ vertices. In fact, the fourth 2-star must be build on at most $n/4-1$ vertices and consequently have $t_4\le n^2/32-7n/8+6\le t_1-3n/4+6$ edges.

Now suppose that a graph $H_T$ has at least 5 disjoint 2-stars. By $n_4$ and $n_5$ we denote the number of vertices in the forth and the fifth largest 2-stars of $H_T$. Without loss of generality $n_4\ge n_5$. Then we have $n_5<n/5$, so one can modify $H$ by removing from each of $S_1$, $S_2$, $S_3$ one edge, choosing two vertices of the smallest degree in $S_5$, removing $m'<2n/5$ edges incident to them, and joining them to $S_4$ increasing the number of edges in $S_4$ to at most $t_1-3n/4+6+2n/5+3 <t_1-n/3$. The resulting graph $H'$ has the maximum degree smaller than $\Delta(H)$, which contradict the assumption that $H\in \F(n,m)$. Consequently, removing $470$ edges results in a graph which contains at most four 2-stars and the assertion follows.   
\end{proof}

 \section{Paths of length three}

In this section we study the maximum degree of dense $\pp$-free 3-graphs.
As we see soon, both the results and their proofs are surprsingly similar to  
that presented in the previous  section.

For 2-graphs the problem is again an easy exercise -- 
a graph whose components are cycles of length three (except, perhaps, one isolated edge if $n\equiv 2$ (mod 3)) is the largest $P^2_3$-free graph on $n$ vertices and has the maximum degree two. Here, we concentrate on the first non-trivial case when we study 
the maximum degree of $P^3_3$-free 3-graphs.

The maximum number of edges in a $P^3_3$-free 3-graph on $n$ vertices for all $n$ 
was found by Jackowska, Polcyn and Ruci\'nski in \cite{JPRt}. 

\begin{theorem}\label{ex1}
	Let $\hat h(n)$ denote the maximum number of edges in a $P^3_3$-free 3-graph on $n$ vertices.
Then
	$$\hat h(n)=\begin{cases}
	\binom n3 &\quad \textrm{for}\quad n=3,4,5,6,\\
	20 &\quad \textrm{for}\quad n=7,\\
	\binom {n-1}2 &\quad \textrm{for}\quad n\ge 8\,. 
	\end{cases} $$      
\end{theorem}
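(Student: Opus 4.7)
The plan is to establish both bounds separately. The lower bound is easy via constructions. For $n\le 6$ the complete $3$-graph on $n$ vertices is $P^3_3$-free because a loose $3$-path needs $7$ vertices. For $n=7$, taking the complete $3$-graph on six of the seven vertices and leaving the seventh isolated produces a $P^3_3$-free $3$-graph with $\binom{6}{3}=20$ edges. For $n\ge 8$, the \emph{star} consisting of all triples through a fixed vertex $v$ has $\binom{n-1}{2}$ edges and is $P^3_3$-free, because in a loose $3$-path the two outer edges must be disjoint while in the star any two edges meet in $v$.

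For the upper bound I would split the argument on the matching number $\nu(H)$. If $\nu(H)=1$ then $H$ is an intersecting family of $3$-sets, so the Erd\H{o}s--Ko--Rado theorem (applicable since $n\ge 6$) already gives $|H|\le\binom{n-1}{2}$ with equality only for a star. Thus one may assume $\nu(H)\ge 2$ and fix two disjoint edges $e_1,e_2\in H$; put $A=e_1\cup e_2$ and $B=V\setminus A$, so $|B|=n-6$. The decisive observation is that every edge $f\in H$ with $|f\cap e_1|=|f\cap e_2|=1$ would, together with $e_1$ and $e_2$, form a copy of $P^3_3$; hence no such $f$ exists.

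The surviving edges I would classify by how they meet $A$ and bound by induction on $n$: edges contained in $V\setminus e_2$ (and symmetrically $V\setminus e_1$), together with $e_1$, form a $P^3_3$-free hypergraph on $n-3$ vertices; edges meeting a single $e_i$ in exactly two vertices contribute only an additive $O(n)$ term; and edges contained in $B$ form a $P^3_3$-free hypergraph on $n-6$ vertices. After carefully removing the double counting (edges inside $B$ are counted by both of the first two families), the total for $n\ge 8$ drops strictly below $\binom{n-1}{2}$, so the intersecting case delivers the extremum.

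I expect the main obstacle to be the tight bookkeeping in the non-intersecting case once $\nu(H)\ge 3$: three pairwise disjoint edges $e_1,e_2,e_3$ trigger many additional forbidden triples through different choices of the outer pair, and one has to verify that these constraints genuinely push the total strictly below $\binom{n-1}{2}$ rather than merely matching it. The boundary cases $n=7,8$ also require direct verification, because the extremal configuration switches from a nearly complete $6$-vertex subclique at $n=7$ to a star for $n\ge 8$, and the induction base must be set sharply enough to detect this transition.
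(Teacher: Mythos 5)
First, a remark on the comparison itself: the paper does not prove Theorem~\ref{ex1} at all --- it is quoted from Jackowska, Polcyn and Ruci\'nski~\cite{JPRt} --- so your proposal can only be judged on its own merits. The parts that are correct: the three constructions for the lower bound, the reduction of the intersecting case ($\nu(H)=1$) to the Erd\H{o}s--Ko--Rado theorem, and the key observation that no edge $f$ can satisfy $|f\cap e_1|=|f\cap e_2|=1$ for two disjoint edges $e_1,e_2\in H$ (such an $f$ would indeed complete a copy of $P^3_3$ on $7$ vertices).

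The gap is in the inductive bookkeeping for $\nu(H)\ge 2$, and it is not a matter of ``carefully removing the double counting'': the inequality you are implicitly using is too weak by roughly a factor of two. Writing $H_1=H[V\setminus e_2]$, $H_2=H[V\setminus e_1]$ and $H_0=H[B]$, your decomposition gives $|H|\le |H_1|+|H_2|-|H_0|+18$ with $|H_i|\le \hat h(n-3)=\binom{n-4}{2}$; since $H_0$ is the \emph{only} overlap between $H_1$ and $H_2$ and you have no lower bound on $|H_0|$ (it may well be empty, as every edge may meet $e_1\cup e_2$), the right-hand side can be as large as $2\binom{n-4}{2}+18\approx 2\binom{n-1}{2}$. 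The edges that escape your control are those meeting exactly one of $e_1,e_2$ in exactly one vertex, with the other two vertices in $B$: there are $\Theta(n^2)$ potential edges of this type for each $e_i$, they lie outside $H_0$, and they are not covered by your additive $O(n)$ term, which only accounts for edges meeting an $e_i$ in two vertices. Closing the argument requires a genuine structural statement about these pendant edges --- essentially that the links of the six vertices of $e_1\cup e_2$ inside $B$ cannot jointly be large without creating a $P^3_3$ --- and that is where the actual work of~\cite{JPRt} lies; your sketch does not supply it. A secondary but real issue: at $n=7$ the extremal graph is $K^{(3)}_6$ with $\nu=2$ and $20>\binom{6}{2}$ edges, so the assertion that the case $\nu(H)\ge 2$ ``drops strictly below $\binom{n-1}{2}$'' is false at $n=7$, and any induction must be anchored at $n\ge 8$ by a separate finite verification, which you acknowledge but do not carry out.
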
  

Let  
\begin{multline*}
\fff(n,m)=\min\{\Delta(H): H=(V,E)\textrm{\ is a $3$-graph such that \ }\\
|V|=n, |E|=m, \textrm{\ and\ }H\not\supset P^3_3\},
\end{multline*}
and let $\G(n,m)$  denote 
the `extremal' family of $P^3_3$-free $3$-graphs on $n$ vertices and $m$ edges such that $\Delta(H)=\fff(n,m)$. 
Moreover,  let us call a 3-graph $H$ {\em quasi-bipartite} if one can partition its set of vertices into three sets: $X=\{x_1,x_2,\dots, x_s\}$, $Y=\{y_1,y_2,\dots, y_s\}$, and 
$Z=\{z_1,z_2,\dots, z_t\}$ in such a way that all the edges of $H$  are of type 
$\{x_i,y_i,z_j\}$ for some $i=1,2,\dots, s$, $j=1,2,\dots, t$.  Finally, by a star with center $v$ we denote a 3-graph in which each edge contains $v$. 
Then the following holds. 

\begin{theorem}\label{th2a}
	There exists $\bar n_2$ such that for every $n\ge \bar n_2$,  and 
	$$n^2/8-\frac n5\le m\le n^2/8 \,,$$
	each graph from $\G(n,m)$ is  quasi-bipartite.

Moreover, there exists $\tilde n_2$ such that for every $n\ge \tilde n_2$ and 
	$$n^2/8< m\le \binom {n-1}2\,,$$ 
each graph from $\G(n,m)$ has the maximum degree at least $n^2/32$ and 
we can delete from it at most $144$ edges and get a union of 
	at most four stars and some number of isolated vertices. 
\end{theorem}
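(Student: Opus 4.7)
The plan is to mirror the two-step structure of the proof of Theorem~\ref{th1a}. The core new ingredient will be an analog of Lemma~\ref{podzialp4}: for every $\pp$-free 3-graph $H$ there is a partition $V=R\cup S\cup T$ such that $|H_R|\le c|R|$ for an absolute constant $c$, the subgraph $H_S=H[S]$ is quasi-bipartite (and hence $|H_S|\le |S|^2/8$), and $H_T$ is a family of vertex-disjoint stars whose centers lie in $S$ and whose two remaining vertices per edge lie in $T$ (so $|H_T|\le \binom{|T|}{2}$). I would prove this lemma in the same style as Lemma~\ref{podzialp4}: push low-degree vertices into $R$ one at a time until the remainder $\hH$ has minimum degree above a fixed threshold, and then analyze the possible signatures $\sg(e)$ of edges $e\in\hH$.

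The signature analysis is where most of the work would live. I expect a short sequence of claims forcing $\sg(e)$ into one of exactly two patterns: (i) a single dubleton together with the complementary singleton (the \emph{quasi-bipartite pattern}, realized by $\{x_i,y_i,z_j\}$ in a quasi-bipartite graph, where $\{x_i,y_i\}$ is a twin pair and $\{z_j\}$ is a free singleton), or (ii) two dubletons sharing exactly one vertex (the \emph{star pattern}, centered at that shared vertex). The forbidden configurations would be ruled out as follows: two disjoint singletons in $\sg(e)$ instantly yield $\pp$ with $e$ as the middle edge; two dubletons sharing two vertices, or a singleton and a dubleton sharing a vertex, would be excluded by using the minimum-degree hypothesis on $\hH$ to supply a third edge completing a $\pp$ of length three (this is where the analysis genuinely differs from the $\pcz$-free case in Section~2). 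With the classification in hand, the set $S$ is defined to contain the twin pairs of $\hH$ together with the free singleton vertices appearing in pattern-(i) edges, and $T$ is the remainder of $\hV$; one verifies that edges of pattern (i) fit into $H[S]$ as a quasi-bipartite graph, while edges of pattern (ii) assemble into the stars of $H_T$.

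Given the decomposition, both halves of the theorem would follow by arguments parallel to Section~2. The counting bound $|H|\le c|R|+|S|^2/8+|T|^2/2$ is immediate, and the averaging argument of Claim~\ref{tp4} — applied to the 2-graph with edge set $\{h\cap T:h\in H_T\}$, whose average degree is at least $2m/n$ — gives that whenever $T\ne\emptyset$ and $m\gtrsim n^2/9$, $H_T$ contains a star with at least $2m^2/n^2+m/n$ edges. For $n^2/8-n/5\le m\le n^2/8$ this star estimate exceeds the maximum degree $\lceil n/2\rceil$ attained by any quasi-bipartite graph on $n$ vertices, so $T=\emptyset$; a short counting argument analogous to Claim~\ref{cl42} then eliminates $R$ and shows $H=H_S$ is quasi-bipartite. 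For $m>n^2/8$ the same star estimate supplies $\Delta(H)\ge n^2/32$; the bound of $144$ deletable edges and the reduction to at most four stars would then follow from the reshuffling arguments of Claim~\ref{6starsp4} and its successor — if $r+s$ exceeded a specific constant (determined by balancing $cr+s^2/8$ against the gain from redistributing edges among the three largest stars) one could strictly decrease $\Delta(H)$, contradicting extremality.

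The principal obstacle is the decomposition lemma, and specifically ruling out signatures mixing a singleton and a dubleton with a shared vertex. Unlike in the $\pcz$-free case of Section~2, where signatures with singletons were forbidden outright, here singletons are genuinely present in the extremal quasi-bipartite graphs, so the argument has to exploit the high-degree hypothesis on $\hH$ to extract a length-three loose path rather than a length-two one. Getting the signature classification down to precisely the two patterns above is the delicate step on which the rest of the proof rests.
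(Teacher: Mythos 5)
Your overall plan is the paper's plan: the same decomposition lemma (Lemma~\ref{podzial}), the same two-pattern signature classification (Claim~\ref{sii5}), and the same counting, averaging and star-reshuffling arguments imported wholesale from Section~2. The gap sits exactly at the step you yourself flag as delicate. Your construction of the exceptional set $R$ (peeling low-degree vertices only) is insufficient: the paper first moves into $R$ every \emph{component containing a linear triangle} $C$ (three edges pairwise meeting in one vertex), which costs only $O(|R|)$ edges because such a component has at most $4n$ edges (Claim~\ref{spojny}); only then does it peel vertices of degree at most six. The resulting $\hH$ is $\{\pp,C\}$-free, and this $C$-freeness is used repeatedly in the signature claims. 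Peeling alone does not achieve it, since a component with at most $4n'$ edges can perfectly well have minimum degree seven.

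Relatedly, your assertion that two disjoint singletons in $\sg(e)$ ``instantly yield $\pp$'' is false. If $e=\{x_1,x_2,x_3\}$ with witnessing edges $e_1\cap e=\{x_1\}$ and $e_2\cap e=\{x_2\}$, then $e_1,e,e_2$ is a loose path only when $e_1\cap e_2=\emptyset$; if $e_1$ and $e_2$ share one vertex outside $e$ you get the triangle $C$ (whence the need for $C$-freeness), and if they share two vertices you get neither $\pp$ nor $C$ and the paper needs a further degree argument (Claim~\ref{sii2}) to reach a contradiction. The exclusion of the remaining mixed signatures (Claim~\ref{sii3}) is likewise a genuine case analysis rather than a one-line application of the degree hypothesis; its conclusion --- that when two dubletons appear their common vertex is itself a \emph{singleton} of $\sg(e)$ --- is what guarantees that star centers are singletons and hence land in $S$. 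Your definition of $S$ (twins plus only the pattern-(i) free singletons) would put star centers into $T$, contradicting your own requirement that centers lie in $S$ and invalidating the bound $|H_T|\le\binom{|T|}{2}$; the paper takes $S$ to be all twins together with \emph{all} singletons. Once these points are repaired, the rest of your argument matches the paper's proof essentially step for step.
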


Observe that Theorem~\ref{th2} follows directly from Theorems~\ref{th2a}.
Another immediate  consequence  of the above two statements is the following result (note that the function $f(x)$ below is the same as the one  defined in 
Corollary~\ref{cor1a}).

\begin{cor}\label{cor2a}
	For every $x\in [0,1/4)\cup (1/4,1]$ the limit 
	$$f(x)=\lim_{n\to\infty} \frac{\fff(n,x \binom{n-1}{2})}{\binom{n-1}2}$$
	exists and 
	$$f(x)=\begin{cases}0&\textrm{\ for\ }\quad 0\le x< 1/4,\\
	(1+2x+\sqrt{12x-3})/24&\textrm{\ for\ }\quad 1/4<x< 1/3,\\
	(1+3x+2\sqrt{6x-2})/18&\textrm{\ for\ }\quad 1/3<x< 1/2,\\
	(x+\sqrt{2x-1})/2&\textrm{\ for\ }\quad 1/2<x\le 1.\qed\end{cases}
	$$
\end{cor}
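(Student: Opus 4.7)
The plan is to derive the corollary from Theorem~\ref{th2a} in the same spirit as Corollary~\ref{cor1a} is derived from Theorem~\ref{th1a}.

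For $x\in[0,1/4)$, I take any quasi-bipartite $3$-graph $H$ on $n$ vertices with $m$ edges (e.g.\ with $|X|=|Y|=\lceil n/4\rceil$, $|Z|=\lfloor n/2\rfloor$, and any $m$ of the $|X|\cdot|Z|$ available triples of the form $\{x_i,y_i,z_j\}$), which is $\pp$-free by construction and has every vertex of degree at most $\max(|X|,|Z|)\le n/2$; hence $\fff(n,m)\le n/2$, and dividing by $\binom{n-1}{2}\sim n^2/2$ gives $f(x)=0$.

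For $x\in(1/4,1]$, the second assertion of Theorem~\ref{th2a} says each extremal $H\in\G(n,m)$ is within $144$ edges of a disjoint union of at most four stars; since $144=o(n^2)$, this perturbation is invisible in the limit, so I may replace the optimisation over $\pp$-free graphs by the optimisation over unions of $r\le 4$ disjoint stars. Such unions are automatically $\pp$-free: a single star contains no loose path of length three because any two of its edges share the centre, and distinct components cannot combine into one. Since a star on $n_i$ vertices has at most $\binom{n_i-1}{2}$ edges, a star carrying $m_i$ edges needs $n_i\ge 1+\sqrt{2m_i}$ vertices, so the task becomes: minimise $c=\max_i m_i$ over tuples $(m_1,\ldots,m_r)$ with $r\le 4$, $\sum m_i=m$ and $\sum(1+\sqrt{2m_i})\le n$.

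By concavity of $s\mapsto\sqrt{2s}$, for a fixed number of stars $r$ and a fixed upper bound $c$ the vertex sum $\sum\sqrt{2m_i}$ is minimised at an extreme point of the polytope $\{(m_1,\ldots,m_r):\sum m_i=m,\,m_i\in[0,c]\}$; this extreme point has $r-1$ stars of size $c$ and one star of size $b=m-(r-1)c\in[0,c]$. Taking the vertex constraint tight, $r+(r-1)\sqrt{2c}+\sqrt{2b}=n$, writing $u=\sqrt{2c}$ and eliminating $\sqrt{2b}$, the relevant (larger) root of the resulting quadratic gives, after division by $\binom{n-1}{2}$ and letting $n\to\infty$, the asymptotic values $(1+2x+\sqrt{12x-3})/24$ for $r=4$, $(1+3x+2\sqrt{6x-2})/18$ for $r=3$, and $(x+\sqrt{2x-1})/2$ for $r=2$. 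Each expression is valid precisely where the feasibility $b\in[0,c]$ holds, i.e.\ on $[1/4,1/3]$, $[1/3,1/2]$ and $[1/2,1]$ respectively; the three formulas agree at $x=1/3$ and $x=1/2$, giving a continuous piecewise $f$. A matching upper-bound construction is the explicit union of $r-1$ stars of size $c$ and one of size $b$.

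I expect the main obstacle to be verifying that the $(r-1,1)$ shape is globally optimal, as opposed to alternative shapes such as $(2,2)$ or $(1,r-1)$ for $r=4$: these either violate $c\ge b$ (so that $c$ is not actually the maximum) or yield larger values of $c$, which can be checked directly from the corresponding quadratics. Checking continuity of $f$ at the junctions $x=1/3,\,1/2$ is then routine.
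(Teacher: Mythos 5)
Your overall strategy coincides with the paper's: the paper offers no written proof of this corollary beyond declaring it an ``immediate consequence'' of Theorem~\ref{th2a}, together with the remark (made for Corollary~\ref{cor1a}) that one minimizes the maximum degree of a union of stars by making the $r-1$ largest stars roughly equal. Your write-up is the fleshed-out version of that remark; the construction for $x<1/4$, the reduction modulo $144=o(n^2)$ edges, and the three quadratics (which do reproduce the three stated expressions) are all fine.

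One step needs repair, and it is exactly the one you flag as ``the main obstacle'': the feasibility test ``$b\in[0,c]$'' is not the right one and, taken literally, gives the wrong answer on $(1/3,1/2)$. Take $x=2/5$, so $m\approx n^2/5$. The $r=4$ quadratic's larger root gives $\sqrt{2c}=n\bigl(3+\sqrt{12x-3}\bigr)/12\approx 0.362n$, hence $c\approx 0.0655n^2$ and $b=m-3c\approx 0.0037n^2$, which does lie in $[0,c]$; the corresponding value $(1+2x+\sqrt{12x-3})/24\approx 0.131$ is then \emph{smaller} than the claimed $f(2/5)\approx 0.193$. The configuration is nevertheless infeasible, because $3\sqrt{2c}+\sqrt{2b}\approx 1.17n>n$: the substitution $\sqrt{2b}=n-3\sqrt{2c}$ has silently produced a negative square root. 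The missing check is $\sqrt{2c}\le n/(r-1)$, i.e.\ that the $r-1$ full stars actually fit. A clean way to organize the whole optimization is to observe that
$$\phi(c)=k\sqrt{2c}+\sqrt{2(m-kc)},\qquad k=\lfloor m/c\rfloor,$$
is a continuous, decreasing function of $c$ on $[m/4,m]$ with range $[\sqrt{2m},\sqrt{8m}]$, so for $1/4<x\le 1$ the optimum is the unique root of $\phi(c)=n$, which lies in the piece with $2mk\le n^2\le 2m(k+1)$, i.e.\ $k\le 1/x\le k+1$; this gives $k=3,2,1$ full stars on $(1/4,1/3)$, $(1/3,1/2)$, $(1/2,1]$ respectively and hence the three formulas. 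Relatedly, your extreme-point description must allow zero coordinates: for $x\in(1/3,1/2)$ the optimal $4$-tuple is $(c,c,m-2c,0)$, not $(c,c,c,m-3c)$. With these corrections the argument is complete.
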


The proof of Theorems \ref{th2a} follows closely the way we proved Theorem~\ref{th1a}.
Thus, as before, we start with  the following decomposition lemma.

\begin{lemma}\label{podzial}
	For any $\pp$-free 3-graph $H$ there exists a partition of its set of vertices
$V=R\cup S\cup T$, such that subhypergraphs of $H$ defined as $H_R=\{h\in H:h\cap R\neq \emptyset\}$, $H_S=H[S]$ and $H_T=H\setminus(H_R\cap H_S)=\{h\in H[V\setminus R]: h\cap T\neq\emptyset\}$ satisfy:
	\begin{enumerate}
		\item[(i)]  $|H_R|\le 6|R|$,   
		\item[(ii)] $H_S$ is quasi-bipartite, and so $|H_S|\le |S|^2/8$,
		\item[(iii)]  $H_T$ is a family of disjoint stars such that centers of these stars are in $S$ whereas all other vertices are in $T$, and so $|H_T|\le \binom{|T|}2$.
	\end{enumerate}
\end{lemma}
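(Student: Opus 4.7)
The proof follows the template of Lemma~\ref{podzialp4}. First I build $R$ by iteratively moving vertices of current degree at most $6$ into $R$, until the residual $3$-graph $\hH = H[V\setminus R]$ has minimum degree at least $7$; since each such vertex carries at most $6$ edges with it, we obtain (i) with $|H_R|\le 6|R|$.

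Next I analyse $\hH$ via edge signatures $\sg(e) = \{e'\cap e : e' \in \hE\setminus\{e\}\}$. Adapting Claims~\ref{si1}--\ref{si4} of the $4$-graph proof, I would establish by a case analysis on intersection types the following dichotomy: for each edge $e=\{a,b,c\}$ of $\hH$, either \textbf{(A)} some pair $\{x,y\}\subset e$ is a \emph{twin pair} of $\hH$ (no edge of $\hE$ contains exactly one of $x,y$), or \textbf{(B)} some vertex $v\in e$ is contained in every edge of $\hE$ that meets $e$, so that $e$ belongs to a star centred at $v$. Indeed, if $e$ fails both (A) and (B) then, for every pair $\{x,y\}\subset e$ there exists an edge through $x$ missing $y$, and for every $v\in e$ there exists an edge meeting $e$ but not containing $v$; combining two such edges with $e$ and invoking $\pp$-freeness together with the min-degree bound (which guarantees enough candidate edges to pick from) produces an explicit copy of $\pp$, a contradiction.

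Now I build the partition. Call a pair $\{x,y\}\subset\hV$ a twin if no edge of $\hE$ separates $x$ from $y$; the relation is an equivalence whose classes have size exactly two (a class of size $\ge 3$ would force every edge through one of its vertices to contain all three, contradicting the min-degree bound), so twin pairs are pairwise disjoint. Let $S$ be the union of all twin pairs together with all vertices $z\in\hV$ occurring as the third vertex of some edge $\{x,y,z\}\in\hE$ with $\{x,y\}$ a twin pair, and set $T = \hV\setminus S$. For (ii): by the dichotomy any edge with all three vertices in $S$ is of type (A), hence consists of a twin pair plus a $z$-vertex, so $H_S$ is quasi-bipartite with parts $X,Y$ of equal size coming from the twin pairs and $Z$ from the $z$-vertices, and maximising $|X|\cdot|Z|$ subject to $2|X|+|Z|=|S|$ gives $|H_S|\le |S|^2/8$. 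For (iii): any edge of $H_T$ meets $T$, so it is not of type (A); by the dichotomy it is of type (B), belonging to a star whose centre $v$ must lie in $S$ (its large degree forces it into either a twin pair or a $z$-position) and whose remaining two vertices lie in $T$. Two such stars sharing a leaf would, together with one further edge at that leaf, realise a copy of $\pp$, so the stars are leaf-disjoint, giving $|H_T|\le \binom{|T|}{2}$.

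The main obstacle is executing the signature analysis cleanly to obtain the dichotomy. In the $4$-uniform setting of Lemma~\ref{podzialp4}, a singleton in the signature is outright forbidden by $\pcz$-freeness, which essentially drove that proof; here a singleton merely witnesses an edge sharing one vertex with $e$ and is permitted, so one must exploit the \emph{longer} obstruction $\pp$ more carefully. The heart of the argument is showing that whenever an edge $e$ admits two ``independent'' singleton intersections -- one through each of two distinct vertices of $e$ -- the minimum-degree condition furnishes a third edge that closes the configuration into a copy of $\pp$.
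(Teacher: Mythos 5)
Your overall architecture matches the paper's proof of Lemma~\ref{podzial} (greedily move low-degree vertices into $R$, analyse signatures in the residual graph $\hH$ to get a twin/star dichotomy, then let $S$ collect twins and star centers), but there is a genuine gap exactly at the step you yourself flag as "the main obstacle," and it is not merely a matter of executing a routine case analysis. The dichotomy (A)-or-(B) cannot be derived from $\pp$-freeness and the minimum-degree bound alone: if an edge $e=\{x_1,x_2,x_3\}$ is met by an edge $f_1$ in exactly $\{x_1\}$ and by an edge $f_2$ in exactly $\{x_2\}$, then $f_1,e,f_2$ form a copy of $\pp$ only when $f_1\cap f_2=\emptyset$; if $|f_1\cap f_2|=1$ they form a loose triangle $C$ (the linear $3$-graph with three edges and six vertices), which is not forbidden by hypothesis. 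This is precisely why the paper's construction of $R$ has an extra step you omit: before pruning low-degree vertices, it moves into $R$ every \emph{component} of $H$ containing a copy of $C$, using Claim~\ref{spojny} (a connected $\pp$-free $3$-graph on $n$ vertices containing $C$ has at most $4n$ edges) to preserve the bound $|H_R|\le 6|R|$. Only after $\hH$ is both $\pp$-free and $C$-free does the signature analysis (Claims~\ref{sii1}--\ref{sii5}) close every bad configuration into a forbidden subgraph. Without this step your promised "explicit copy of $\pp$" may instead be a triangle, and the argument stalls.

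Two further points are asserted rather than proved. First, the entire content of the lemma lives in the case analysis you defer ("I would establish by a case analysis\dots"); in the paper this occupies Claims~\ref{sii2} and~\ref{sii3}, each of which requires producing auxiliary edges from the degree bound and checking several intersection patterns against both $\pp$ and $C$, so it cannot be treated as an adaptation of the $4$-uniform Claims~\ref{si1}--\ref{si4}, where singleton intersections are forbidden outright. Second, your definition of $S$ (twin pairs plus third vertices of twin-pair edges) need not contain all star centers: a center $v$ all of whose edges have signatures consisting of two dubletons through $v$ is not obviously the third vertex of any twin-pair edge, and your parenthetical "its large degree forces it into either a twin pair or a $z$-position" is unsupported. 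The paper avoids this by defining $S$ as the union of all twins and all \emph{singletons} (vertices $v$ with $\{v\}\in\sg(e)$ for some $e$), and Claim~\ref{sii3} guarantees that every star center is such a singleton.
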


\begin{proof}
Let $H=(V,E)$ be a $\pp$-free 3-graph with  $|V|=n$ and  $|E|=m$. 
	We start with defining the set of `exceptional' vertices $R\subseteq V$. By a triangle $C$ we 
mean linear 3-graph with six vertices and three edges. First we include in $R$ all the components 
of $H$ which contain $C$. Then, from the remaining graph we move to  $R$ 
	 vertices of degree at most six one by one, until we end up with a graph $\hH$ of minimum degree at least  seven.
Then we set  $H_R=\{h\in H:h\cap R\neq\emptyset\}$ and define a graph $\hH=(\hV,\hE)$
by putting $\hV=V\setminus R$, $\hE=E\setminus H_R$. 

In order to estimate the number of edges in $H_R$ we need the following simple fact from \cite{JPR}.
 \begin{claim}\label{spojny}
	If $H$ is a connected $\pp$-free 3-graph on $n$ vertices containing~$\cc$,  then
	$
	|E(H)| \le 4n
	$.	
\end{claim}
  
Thus, 
the required bound $6|R|$ for the number of edges in $H_R$ follows. 

The main tool in proving Lemma~\ref{podzial} is, again,  an analysis of possible  signatures of edges in a 3-graph $\hH$, where as before, the signature of $e\in \hE$ is defined as the projection  
of $\hE$ onto $e$.

\begin{claim}\label{sii1}
Every vertex of $e\in \hE$ is covered by at least one set of the signature of~$e$.	 
\end{claim}

\begin{proof} It follows from the fact that $\delta(\hH)\ge 7>1$.
\end{proof}

\begin{claim}\label{sii2}
	The signature of none of the edges of $\hH$ contains two singletons.
\end{claim}
\begin{proof}
	Assume that  an edge $e=\{x_1,x_2,x_3\}\in \hE$ contains two singletons, say $x_1$ and $x_2$. Since $\hH$ is $\{\pp,\cc\}$-free, two edges that intersects $e$ on $x_1$ and $x_2$ must share two points, say $y_1$ and $y_2$. 
	
	Set $X=\{x_1,x_2,x_3,y_1,y_2\}$. Since the degree of $x_3$ is at least seven it must belong to  an edge $e'$ which is not contained in $X$. If $|e'\cap X|=1$ it would lead to a $\pp$, if 
	$e'\cap X=\{x_3,y_i\}$ it would create $C$. Hence, $e'$ must consists of $v\notin X$ and one of the vertices $x_1, x_2$. Let us assume that $e'=\{v,x_1,x_3\}$. Now consider possible candidates for edges $e''$ 
	which contain $v$. If for such an edge $|e''\cap X|\le 1$ then it leads to $\pp$, whereas 
	if $e''=\{v,x_i,y_j\}$ for some $i=1,2,3$, $j=1,2$, it creates a triangle $C$. 
	Thus, the only candidates for $e''$ are triples $\{v,y_1,y_2\}$, and $\{v,x_i,x_j\}$ 
	for $1\le i<j\le 3$. But it means that the degree of $v$ is at most four, while $\delta(\hH)\ge 7$. A contradiction. 
\end{proof}

\begin{claim}\label{sii3}
	If the signature of an edge $e\in \hH$ contains two dubletons, then their intersection is a singleton of $e$.
\end{claim}

\begin{proof}
	Let $e=\{x,y,z\}\in \hH$ and let $f=\{x,y,v_x\}$ and $f'=\{y,z,v_z\}$,
	denote two edges containing two dubletons $\{x,y\},\{y,z\}\in \sg(e)$. 
Suppose that $y$ is not a singleton of $\sg(e)$. 	
	By Claim~\ref{sii2}
we may assume that $x$ is not a singleton either. We first  argue that then there exists $f''=\{x,y,v\}$  such that $v\neq v_z$.  If $v_x\neq v_z$, then we can take just $f''=f$,
 so let $v_x=v_z$. 
Note that $x$ is  not a singleton in $\sg(e)$ so each edges containing it must contain some other 
vertex of~$e$. Moreover, any edge $e'=\{w, x,z\}$ with $w\neq v_z$ is prohibited since it contains two singletons $x$ and $z$. Thus, the existence of $f''$ follows from the fact that 
$\deg_H(x)\ge 7>3$. Now consider possible candidates for edges $e'$ containing~$v$. If $e'\cap e=\emptyset$ 
then it leads to either $\pp$ or~$C$. If $|e'\cap e|=1$ then it creates either  singletons $x$ or $y$ in $e$,
or the second (next to $y$) singleton $v$ in $f''$. Thus, all edges containing $v$ are contained in $e\cup \{v\}$, contradicting the fact that $\deg(v)\ge 7$.  
\end{proof}

\begin{claim}\label{sii4}
The signature of no edge from $\hH$ contains three dubletons.
\end{claim}

\begin{proof} It follows from Claims \ref{sii2} and~\ref{sii3}.
\end{proof}

\begin{claim}\label{sii5}
The signature of  an edge from $\hH$ consists either of disjoint singleton and dubleton, or
of two dubletons intersecting on a singleton.
\end{claim}

\begin{proof} It is a direct consequence of  Claims \ref{sii1}-\ref{sii4}.
\end{proof}

Now we can describe the partition of $\hV$ into $S$ and $T$. 
We call a pair of vertices $\{x,y\}\subset \hV$ \emph{a twin} if it cannot be separated by an edge $e\in \hE$, i.e. for no such edge $|\{x,y\}\cap e|=1$.  By singletons we mean all one-element sets which belong to a signature of some edge of $E$.

Now let $S\subset \hV$ consists of all twins and singletons of $\hH$, $|S|=s$, and $T=\hV\setminus S$. It is easy to see that an edge of $\hH$ is contained in $S$ 
if and only if  it has signature which consists of disjoint dubleton and singleton. All other edges 
belong to $H_T$. Note that each edge of $H_T$ contains a singleton which belong to $S$.

 Finally, note that any quasi-bipartite 3-graph on $s$ vertices contains at most 
$$m\le \max\{s'(s-2s'):s'\le s\}\le s^2/8\,,$$
edges, so $|H_S|\le s^2/8$.
\end{proof} 

\begin{proof}[Proof of Theorem~\ref{th2a}]
Since the argument is almost identical to the one from the proof of 
Theorem~\ref{th1a} we skip some technical details. 
Let $H\in \cG(n,m)$, $m\ge n^2/8-n/5$, and let a partition $V=R\cup S\cup T$ and subgraphs $H_R$, $H_S$ and $H_T$ of $H$ be as defined in Lemma~\ref{podzial}. Set $|R|=r$, $|S|=s$ and $|T|=t$. By Lemma~\ref{podzial},
\begin{equation}\label{m}
|H|=|H_R| + |H_S| + |H_T|\le 4r+s^2/8+t^2/2.
\end{equation}
We start with the following claim.
\begin{claim}\label{t}
	If $T\neq \emptyset$ then there exists in $H_T$ a star with at least $2m^2/n^2+m/n$ edges.
\end{claim}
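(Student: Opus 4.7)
The plan is to mirror the proof of Claim~\ref{tp4}: project $H_T$ onto a 2-graph on $T$, lower-bound its average degree, and extract a large component that must correspond to a single star of $H_T$.

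First I would define $G_T=(T,E_T)$ with $E_T=\{h\cap T:h\in H_T\}$. By Lemma~\ref{podzial}(iii) every edge of $H_T$ has the form $\{c,a,b\}$ with $c\in S$, $a,b\in T$, so $G_T$ is well-defined. The projection is injective: two $H_T$-edges $\{c_1,a,b\}$ and $\{c_2,a,b\}$ sharing a $T$-pair would, via the signature analysis from Claims~\ref{sii2}--\ref{sii5}, force the additional edge $\{c_1,v,c_2\}$ to appear and then exhaust the neighbourhood of one of $a,b$, making it a singleton and hence placing it in $S$, contradiction. So $|E_T|=|H_T|$. Combined with $\delta(\hH)\ge 7$, injectivity also forces $|T|\ge 8$ when $T\neq\emptyset$. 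Using $|H_R|\le 6r$ and $|H_S|\le s^2/8$, the average degree of $G_T$ is
\[
\frac{2|H_T|}{|T|}\ge \frac{2(m-6r-s^2/8)}{n-r-s}\ge \frac{2m}{n},
\]
where the final inequality is a short rearrangement using $m\ge n^2/8-n/5$ and $r+s\le n-8$: it reduces to $r(m-6n)+s(m-ns/8)\ge 0$, and both summands are nonnegative for $n$ large.

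Any graph of average degree $d$ contains a component on at least $d+1$ vertices, hence with at least $d(d+1)/2$ edges; taking $d=2m/n$ yields a component $C$ of $G_T$ with at least $2m^2/n^2+m/n$ edges. To conclude, I would verify that the $H_T$-preimage of $C$ lies inside one star: if $e,e'\in H_T$ share a vertex $v\in T$ then $|e\cap e'|\ge 2$ (else $\{v\}\in \sg(e)$ puts $v$ in $S$), and injectivity forbids the second shared vertex from being another element of $T$, so it must be the common center of $e$ and $e'$ in $S$. Transitivity along a spanning tree of $C$ then produces a single star of $H_T$ with at least $2m^2/n^2+m/n$ edges. The main technical step is this correspondence between components of $G_T$ and stars of $H_T$, which rests on the signature-based injectivity of the $T$-projection; once that is isolated, the rest is a direct adaptation of the 4-graph argument.
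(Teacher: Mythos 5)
Your proof is correct and follows essentially the same route as the paper: project $H_T$ onto the $2$-graph $G_T$ on $T$, bound its average degree below by $2m/n$ using $|H_R|\le 6r$, $|H_S|\le s^2/8$ and $m\ge n^2/8-n/5$, and extract a component with at least $(d+1)d/2$ edges. The signature-based digression on injectivity and on components corresponding to single stars is redundant, since Lemma~\ref{podzial}(iii) already guarantees that $H_T$ is a family of \emph{disjoint} stars with centers in $S$ and remaining vertices in $T$, which gives both facts immediately.
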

\begin{proof}
Indeed, then the  2-graph $G_T=(T,E_T)$ defined on the set of vertices $T$ by 
taking   $E_T=\{h\cap T: h\in H_T\}$ has the average degree bounded from below by
	\begin{align*}
	\frac{2|H_T|}{|T|}&=\frac{2(m-|H_R|-|H_S|)}{n-r-s}\ge \frac{2(m-6r-s^2/8)}{n-r-s}=\\
	&\frac{2m}{n}+\frac {(2m/n)(r+s)-12r-s^2/4}{n-r-s}\ge \frac{2m}{n},
	\end{align*}
and so contains  a component of at least $2m^2/n^2+im/n$ edges.
\end{proof}
Since there exists a $\pp$-free quasi-bipartite graph with $n$ vertices and $\lfloor n^2/8\rfloor$ edges, the above result immediately implies the following fact.
\begin{claim}\label{granica}
	If $n^2/9\le m\le n^2/8$, then $T=\emptyset$.\qed
\end{claim}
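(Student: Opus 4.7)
The plan is a short proof by contradiction exploiting a gap between a quadratic lower bound on the maximum degree of $H$ (from Claim~\ref{t}) and a linear upper bound on $\fff(n,m)$ coming from an explicit quasi-bipartite construction.

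First I would record the upper bound on $\fff(n,m)$. For any $m\le \lfloor n^2/8\rfloor$, take $s=\lfloor n/4\rfloor$ disjoint twin pairs $\{x_i,y_i\}$ together with the remaining $t=n-2s$ singleton vertices $z_j$, and consider the family of triples $\{x_i,y_i,z_j\}$. There are $st\ge\lfloor n^2/8\rfloor$ such triples, the resulting 3-graph is quasi-bipartite and hence $\pp$-free, and distributing $m$ of these triples as evenly as possible keeps every degree bounded by $\max(s,t)\le\lceil n/2\rceil$. Since $H\in\cG(n,m)$ minimises the maximum degree over all such graphs, we obtain
$$\Delta(H)=\fff(n,m)\le \lceil n/2\rceil.$$

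Next, suppose for a contradiction that $T\ne\emptyset$. Since $m\ge n^2/9$, Claim~\ref{t} produces a star in $H_T\subseteq H$ with at least
$$\frac{2m^2}{n^2}+\frac{m}{n}\;\ge\;\frac{2(n^2/9)^2}{n^2}\;=\;\frac{2n^2}{81}$$
edges. For $n$ large this quadratic bound exceeds the linear bound $\lceil n/2\rceil$, contradicting the previous display. Hence $T=\emptyset$.

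The only mildly delicate point is checking that the quasi-bipartite construction can indeed be pruned to exactly $m$ edges while keeping $\Delta\le \lceil n/2\rceil$; this reduces to a routine balancing of the chosen triples across the link vertices $z_j$, and does not require any new ideas. Everything else is just plugging in the hypothesis $m\ge n^2/9$ and comparing the resulting quantities.
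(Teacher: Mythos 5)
Your proof is correct and is essentially the paper's own argument: one exhibits a $\pp$-free quasi-bipartite $3$-graph with $\lfloor n^2/8\rfloor$ edges and maximum degree $O(n)$, concludes $\Delta(H)\le\lceil n/2\rceil$ for the minimizer $H\in\cG(n,m)$, and contradicts the star of quadratic size that Claim~\ref{t} produces when $T\neq\emptyset$. (A purely cosmetic point: for $n\equiv 3\pmod 4$ take $s=\lceil n/4\rceil$ rather than $\lfloor n/4\rfloor$ to actually reach $\lfloor n^2/8\rfloor$ edges; nothing in the argument changes.)
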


On the other hand, since $H_R$ is sparse, it turns out that when $H_T=\emptyset$ the 
number of edges $H$ is bounded from below by $\lfloor n^2/8\rfloor$ and this maximum is achieved
only when $H_R=\emptyset$.

\begin{claim}\label{cl422}
	If $H_T=\emptyset$ then $m\le  n^2/8$. Furthermore, if 
in addition $H_R\neq \emptyset $, then  $m \le n^2/8-n/5$.\qed
\end{claim}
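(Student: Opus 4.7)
The plan is to derive both bounds directly from the quantitative estimates provided by Lemma~\ref{podzial}. Since $H_T = \emptyset$, the partition in that lemma reduces to $V = R \cup S$ and $m = |H_R| + |H_S|$. Writing $r = |R|$ and $s = |S| = n - r$, parts (i) and (ii) of the lemma collapse to the single inequality
\[
m \le g(r) := 6r + \frac{(n-r)^2}{8},
\]
and the crucial point is that $g$ is a convex quadratic in $r$ (second derivative $1/4$), so on any closed interval its maximum is attained at an endpoint.

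For the first assertion I would optimize $g$ over $[0,n]$: the endpoint values are $g(0) = n^2/8$ and $g(n) = 6n$, and for $n \ge 49$ the former dominates, giving $m \le n^2/8$. For the sharper second assertion, the hypothesis $H_R \neq \emptyset$ forces $r \ge 1$, so I would optimize instead over $[1,n]$. Convexity gives $m \le \max(g(1), g(n))$; a brief comparison shows that $g(1) = n^2/8 - n/4 + 49/8$ dominates $g(n) = 6n$ for $n \ge 49$, and the desired inequality $g(1) \le n^2/8 - n/5$ reduces to the one-line arithmetic check $n/20 \ge 49/8$, which holds for $n \ge 123$. Choosing $\bar n_2$ above this threshold then yields both parts.

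I do not foresee a conceptual obstacle: the whole argument reduces to optimizing an explicit one-variable convex quadratic, and the two parts of the claim are distinguished purely by whether $R$ is forced to be nonempty. The role of the hypothesis $H_R \neq \emptyset$ is simply to exclude the case $r = 0$, which is precisely the configuration in which the crude upper bound $g(r)$ attains its maximum value $n^2/8$; once this is ruled out, the extra slack of order $n$ appears automatically from the convexity of $g$.
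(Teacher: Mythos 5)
Your argument is correct, and it is essentially the quantitative argument the paper intends: Claim~\ref{cl422} is stated with no proof, implicitly deferring to its $4$-graph analogue, Claim~\ref{cl42}, which likewise bounds $m$ by $|H_S|+|H_R|$ using the decomposition lemma and then treats the cases $r\ge 2$ and $r=1$ separately. The one genuine (and welcome) difference is that you avoid the special case $r=1$ altogether: in the $4$-graph setting the bound $|H_S|\le\binom{\lfloor s/2\rfloor}{2}$ does not drop when $s$ goes from $n$ to $n-1$ with $n$ odd, which is why the paper needs a separate structural argument there; in the $3$-graph setting the bound $|H_S|\le s^2/8$ loses $n/4-1/8>n/5$ already at $s=n-1$, so your uniform convexity-plus-endpoints treatment of $g(r)=6r+(n-r)^2/8$ on $[1,n]$ suffices, at the cost only of taking $n\ge 123$ (harmless, since $\bar n_2$ is free to be large). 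One small point you should make explicit: $H_T=\emptyset$ does force $T=\emptyset$ (every vertex of $T$ has degree at least $7$ in $\hH$ and all edges of $\hH$ meeting $T$ lie in $H_T$), so indeed $s=n-r$; alternatively, $s\le n-r$ together with monotonicity of $s\mapsto s^2/8$ gives the same bound without this observation.
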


Now the first part of Theorem~\ref{th2a} follows directly from Claims~\ref{granica} and~\ref{cl422}. 
In order to show the second part of the assertion we can repeat, almost verbatim, 
the argument used in the proof of Theorem~\ref{th1a}. 
Thus, from   Claims~\ref{t} and~\ref{cl422}, it follows  that if $m>  n^2/8$ then 
$H_T$ contains a star with more than  $n^2/32+n/8$ edges. Consequently,  
 $H$ contains at most three vertices with maximum degree.
Then we infer that $H_T$  consists of at most six disjoint stars since otherwise we
could decrease the maximum degree of three largest ones by merging the sixth and seventh 
into one and add to them three edges taken from the biggest stars. 

Since $H_T$ consists of only few stars the sets $S$ and $R$ must be quite small (simple calculations show that $r+s< 25$) since otherwise we could remove all $\bar m$ edges inside it, 
take three edges from the largest stars, and on the set of $r+s-6$ vertices, where we excluded the
centers of stars of $H_T$, build a star with $\bar m+3$ edges. 
Then, the $\pp$-free graph constructed in this way would have 
the same number of edges as $H$ but smaller maximum degree, contradicting the fact 
that $H\in \cG(n,m)$. Since $r+s<25$, we have $|H_R\cup H_S|\le 6r+s^2/8<144$, i.e. we can remove from $H$ at most 144 edges and get a forest of at most 6 stars and, perhaps, some isolated vertices.

Finally, to complete the proof, it is enough to show that in fact $H_T$ 
consists of at most  four stars.  Indeed, otherwise we could modify a graph accordingly (by decreasing by one three largest stars and incorporate 
these three edges to small stars by shuffling their vertices) so we could keep its number of edges and $\pp$-freeness but decrease by one  its  maximum degree.    
\end{proof}

\section{Final remarks and comments}

It is easy to see that the constant 470 in Theorem~\ref{th1a} is far from being optimal. The reader can easily rewrite the proof to replace it by, say, 30. 
However  finding the smallest possible value of this constant requires more work and 
studying quite a few cases of small 4-graphs. Since it is not crucial for the main result, we just give the examples of the extremal 4-graphs we  have found. 

Let $H_1^4$ be a 4-graph on $3k+8$ vertices, $k\ge 100$, which consists of three complete disjoint 2-stars on $k$ vertices each, three edges joining centers of these stars, and a copy of the unique $\pcz$-free graph $F^4_{1,3}$ on the vertex set $\{x_1,\dots,x_8\}$  with $17$ edges found in \cite{KMW} whose set of 4-edges consists of all 4-element subsets of $\{x_1,\dots, x_8\}$ 
which have at least three elements in $\{x_1,\dots,x_4\}$. Then, to make $H_1^4$ a union of disjoint 2-stars, we need to remove three edges joining the centers of three large 2-stars and at least eight edges from $F^4_{1,3}$. It seems that one can always delete at most eleven 4-edges from 
a dense enough $\pcz$-free graph to get a union of at most 4-stars (and, perhaps, some number of isolated vertices), so the graph $H_1^4$ defined above is in a way extremal. It is however not unique -- one can modify it removing from each 2-star the same number $i\le k/10$ of edges to get another extremal example.     

On the other hand, if we want to get a union of four stars instead of four 2-stars, it is 
enough to remove from  $H_1^4$ only seven edges. However, $H_1^4$  is not extremal for the variant of this problem. A 4-graph 
$H_2^4$  on $3k+4$ which consists of a thick clique on $10$ vertices and three equal complete 2-stars rooted on its vertices needs at least eight edges to be deleted to become a union of at most four stars. The same is true for a 4-graph $H_3^4$ on $3k+6$ vertices which consists of three complete stars on $k$ vertices each, three edges joining their centers, and the complete clique on six vertices.

In a similar way one can try to improve the constant $144$ in Theorem~\ref{th2a}.  
Since  the structure of $P^3_3$-free $3$-graphs is well studied, one can use Theorem~\ref{th2a}
to replace 144 by just 10, and the extremal graph consists of three equal stars and the clique on six vertices.

Another, much more interesting question, is whether a similar rescaling phenomenon can be observed for other extremal problems. There is a number  of candidates for such a behaviour, we just mention two possible directions which follow the line of research initiated by this work. The first one concerns linear 3-paths $P_\ell^3$ of length $\ell$, for $\ell\ge 3$. It is known~\cite{KMV} that the largest number of edges in a $P_\ell^3$-free graph on $n$ vertices is 
$(1/2+o(1))n^2$ and the extremal graph contains vertices of degree $\Omega(n^2)$. Thus, 
since a thick clique is $P_\ell^3$-free, one can expect that this maximum degree drops to $O(n)$ at $m\sim n^2/8$. 

It is also conceivable that one can  generalize of our 
result on  $P^4_2$-free 4-graphs in the following direction.
For $r\ge 1$ let $\F(4r;n,m)$ be a family of $(4r)$-graphs on $n$ vertices and $m$ edges in which no two edges share precisely $2r-1$ points. Frankl and F\"uredi~\cite{FF} proved that to maximize the number of edges in such a graph one needs to take the family of all sets which contain a given set on $2r$ vertices. Clearly, in such a graph the maximum degree is $m=\Theta(n^{2r})$. On the other hand a thick $(4r)$-clique on $n$ vertices, where we first partition vertex set into pairs and then choose $2r$ of them to form an edge, has 
$\binom{\lfloor n/2\rfloor}{2r}$ edges but its maximum degree is just $\binom{\lfloor n/2\rfloor-1}{2r-1}=\Theta(n^{2r-1})$. Thus, one expect  a rapid change of the (minimum) maximum degree at   $m=\binom{\lfloor n/2\rfloor}{2r}$.

\bibliographystyle{plain}

\end{document}